\newtheorem{theorem}{Theorem}[section]
\newtheorem{lemma}[theorem]{Lemma}
\newtheorem{corollary}[theorem]{Corollary}
\newtheorem{proposition}[theorem]{Proposition}
\newtheorem{conjecture}[theorem]{Conjecture}
\theoremstyle{definition}
\newtheorem{definition}[theorem]{Definition}
\theoremstyle{remark}
\newtheorem{remark}[theorem]{Remark}
\numberwithin{equation}{section}
\DeclareMathOperator{\supp}{supp}
\DeclareMathOperator{\prop}{prop}
\title{A localization algebra approach to the Baum-Connes conjecture for extensions}
\begin{document}

\author{Jianguo Zhang}
\address{School of Mathematics and Statistics, Shaanxi Normal University, Xi’an 710119, China.}
\email{jgzhang@snnu.edu.cn}

\thanks{This work was supported by NSFC (Nos. 12171156, 12271165, 12301154) and the Fundamental Research Funds for the Central University (No. 1301032574).}
\date{\today}

\begin{abstract}
For an extension $1\rightarrow N \rightarrow \Gamma \xrightarrow{q} \Gamma / N \rightarrow 1$ of discrete countable groups, it is known that the Baum-Connes conjecture with coefficients holds for $\Gamma$ if it holds for $\Gamma / N$ and $q^{-1}(F)$ for any finite subgroup $F$ of $\Gamma / N$. In this paper, we employ a localization algebra approach to prove this result.
\end{abstract}
\pagestyle{plain}
\maketitle

%\textit{Keywords: coarse Baum-Connes conjecture, filtered $C^{\ast}$-algebras, Roe algebras, product metric spaces.}

\section{Introduction}
Let $\Gamma$ be a discrete countable group and $A$ be a $C^{\ast}$-algebra equipped with a $\Gamma$-action. The Baum-Connes conjecture with coefficients in $A$ for $\Gamma$ asserts that the assembly map $\mu: K^{\Gamma}_{\ast}(\underline{E}\Gamma; A) \rightarrow K_{\ast}(A\rtimes_r \Gamma)$ is an isomorphism (cf. \cite{BC-birth}\cite{BCH-1994}). The left-hand side of the conjecture is the equivariant $K$-homology with coefficients in $A$ of $\underline{E}\Gamma$ which is the classifying space for proper actions of $\Gamma$. And the right-hand side of the conjecture is the $K$-theory of the reduced crossed product, which provides a receptacle for higher indices of $\Gamma$-equivariant elliptic differential operators on a Riemannian manifold equipped with a proper, co-compact isometric $\Gamma$-action. Thus, the Baum-Connes conjecture with coefficients furnishes a topological formula to compute higher indices, and hence it is a generalization of the Atiyah-Singer index theorem. Moreover, the Baum-Connes conjecture implies the Novikov conjecture, the Gromov-Lawson-Rosenberg conjecture and the Kaplansky-Kadison conjecture (cf. \cite{PPA-BC-survey}\cite{WillettYu-Book}).

The Baum-Connes conjecture with coefficients had been verified for a large class of groups, for examples, a-T-menable groups and hyperbolic groups (cf. \cite{Higson-Kasparov}\cite{Lafforgue-2002}\cite{Lafforgue-2012}\cite{MY-BC-hyperbolic}). On the other hand, counterexamples to the Baum-Connes conjecture with non-trivial coefficients do exist (cf. \cite{HLS-2002}). Besides, the Baum-Connes conjecture is still open for special linear groups $SL_{n}(\mathbb{Z})$ for $n\geq 3$, although the assembly map $\mu$ is injective for these groups (cf. \cite{GHW-2005}).

In order to enlarge the class of groups satisfying the Baum-Connes conjecture with coefficients, we need to consider if the conjecture is closed under group extensions. In \cite[Theorem 3.1]{Oyono-BC-extensions}, Oyono-Oyono proved the following theorem.

\begin{theorem}\label{OO-thm}
	Let $1\rightarrow N \rightarrow \Gamma \rightarrow \Gamma / N \rightarrow 1$ be an extension of discrete countable groups. Assume that
	\begin{enumerate}
		\item \label{OO-thm-1} every subgroup of $\Gamma$ containing $N$ as a subgroup of finite index satisfies the Baum-Connes conjecture with coefficients;
		\item \label{OO-thm-2} the group $\Gamma / N$ satisfies the Baum-Connes conjecture with coefficients.
	\end{enumerate}
	Then the group $\Gamma$ satisfies the Baum-Connes conjecture with coefficients.
\end{theorem}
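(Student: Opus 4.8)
The plan is to reduce the conjecture for $\Gamma$ to the two hypotheses by means of the iterated crossed-product identification $A\rtimes_r\Gamma\cong(A\rtimes_r N)\rtimes_r(\Gamma/N)$, which lets one regard $B:=A\rtimes_r N$ as a $(\Gamma/N)$-$C^{\ast}$-algebra and factor the $\Gamma$-assembly map through the $(\Gamma/N)$-assembly map for $B$. Concretely, I would construct a \emph{partial assembly map}
\[
\mu^{\mathrm{part}}\colon K^{\Gamma}_{*}(\underline{E}\Gamma; A)\longrightarrow K^{\Gamma/N}_{*}\bigl(\underline{E}(\Gamma/N);\, A\rtimes_r N\bigr)
\]
fitting into a commutative triangle whose remaining two edges are the full map $\mu_{\Gamma}$ and the $(\Gamma/N)$-assembly map $\mu_{\Gamma/N}$ with coefficients in $B$, the target of the latter being $K_{*}\bigl((A\rtimes_r N)\rtimes_r(\Gamma/N)\bigr)\cong K_{*}(A\rtimes_r\Gamma)$. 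Hypothesis (\ref{OO-thm-2}) makes $\mu_{\Gamma/N}$ an isomorphism, so the problem collapses to showing that $\mu^{\mathrm{part}}$ is an isomorphism.

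To build and analyse $\mu^{\mathrm{part}}$ I would work with a model of $\underline{E}\Gamma$ admitting a $\Gamma$-equivariant projection $p\colon\underline{E}\Gamma\to\underline{E}(\Gamma/N)$, where $\Gamma$ acts on the target through the quotient map $q\colon\Gamma\to\Gamma/N$; such a model exists because $\underline{E}\Gamma\times\underline{E}(\Gamma/N)$, with the diagonal action and the second factor pulled back along $q$, is again a model for $\underline{E}\Gamma$. Over a point $z\in\underline{E}(\Gamma/N)$ with finite stabiliser $F\le\Gamma/N$, the preimage group $\Gamma_{F}:=q^{-1}(F)$ contains $N$ with finite index $|F|$, and the fibre $p^{-1}(z)$ is a model for $\underline{E}\Gamma_{F}$ carrying a proper $\Gamma_{F}$-action. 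The partial assembly map is then defined as fibrewise assembly in the $N$-direction, and over each orbit type it is governed by the assembly map of $\Gamma_{F}$ with coefficients in $A$.

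The bijectivity of $\mu^{\mathrm{part}}$ I would establish by induction along a $(\Gamma/N)$-CW structure on $\underline{E}(\Gamma/N)$, using that $K^{\Gamma}_{*}(\underline{E}\Gamma; A)$ and $K^{\Gamma/N}_{*}(\underline{E}(\Gamma/N); B)$ are the colimits of their restrictions to the $\Gamma$-compact, respectively $(\Gamma/N)$-compact, subspaces. On a single equivariant cell of orbit type $(\Gamma/N)/F$ the relevant contribution to $\mu^{\mathrm{part}}$ is, after induction and suspension, precisely the Baum--Connes assembly map $\mu_{\Gamma_{F}}\colon K^{\Gamma_{F}}_{*}(\underline{E}\Gamma_{F}; A)\to K_{*}(A\rtimes_r\Gamma_{F})$, which is an isomorphism by hypothesis (\ref{OO-thm-1}) since $\Gamma_{F}$ contains $N$ as a finite-index subgroup. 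Attaching cells via six-term Mayer--Vietoris sequences and passing to the colimit over skeleta then propagates bijectivity from the cells to all of $\underline{E}(\Gamma/N)$ by repeated use of the five lemma and the continuity of $K$-theory.

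The main obstacle is not this formal reduction but the construction of $\mu^{\mathrm{part}}$ and the proof that the triangle commutes: one must produce the $(\Gamma/N)$-$C^{\ast}$-algebra structure on $A\rtimes_r N$ (coping with the possible Packer--Raeburn twist by stabilising if necessary), verify the descent identity that the two-step assembly agrees with $\mu_{\Gamma}$ under the decomposition $A\rtimes_r\Gamma\cong(A\rtimes_r N)\rtimes_r(\Gamma/N)$, and make the whole package natural and compatible with the cell filtration and the colimits. Establishing these analytic and homotopy-theoretic compatibilities simultaneously --- rather than the combinatorics of the induction --- is where the real work lies.
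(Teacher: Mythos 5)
Your proposal is a faithful reconstruction of the \emph{original} argument for this theorem --- the partial assembly map of Oyono-Oyono and Chabert--Echterhoff, factoring $\mu_{\Gamma}$ through $\mu_{\Gamma/N}$ with coefficients in $A\rtimes_r N$ via the (twisted) decomposition $A\rtimes_r\Gamma\cong(A\rtimes_r N)\rtimes_r(\Gamma/N)$, and then checking bijectivity of the partial map cell by cell over $\underline{E}(\Gamma/N)$, where the orbit types $(\Gamma/N)/F$ force you to invoke the conjecture for every preimage group $\Gamma_F=q^{-1}(F)$, i.e.\ for every subgroup containing $N$ with finite index. That is exactly why hypothesis (1) is stated the way it is, and your sketch correctly locates both where that hypothesis enters and where the technical weight lies (constructing $\mu^{\mathrm{part}}$, the Packer--Raeburn stabilisation, and the compatibility of the triangle). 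The present paper, however, does not prove the theorem this way: it deduces it as a special case of a strictly stronger statement (Theorem \ref{main-thm}), proved by entirely different means --- a Roe-algebra/localization-algebra model of the conjecture, a Green's imprimitivity theorem at the level of equivariant Roe algebras and coarse $K$-homology (Proposition \ref{Prop-Morita-eq} and Proposition \ref{Thm-Green-imprimi-loc}) giving the reduction ``BC for $N$ with coefficients in $A$ $\Leftrightarrow$ BC for $\Gamma$ with coefficients in $\Ind^{\Gamma}_{N}A$'', and a direct-product theorem proved with localization algebras along one direction and Mayer--Vietoris. Applying the reduction twice (to $N\le\Gamma$ and to the diagonal $\Gamma\le\Gamma\times\Gamma/N$) converts the extension problem into a direct-product problem and eliminates the need for hypothesis (1) for any overgroup other than $N$ itself. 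In short: your route proves the theorem as stated but cannot by itself weaken condition (1), since the finite stabilisers in $\underline{E}(\Gamma/N)$ genuinely produce all the finite-index overgroups of $N$; the paper's route buys the sharper hypothesis at the cost of rebuilding the conjecture in the coarse-geometric framework.
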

Similar results for extensions of locally compact, second countable groups were obtained in \cite{CEO-2004}, \cite{CE-Permanence-BC} and \cite{MN-2006}. And in \cite[Theorem 5.4]{Arano-Kubota-BCextensions}, Arano and Kubota refined the above result by replacing the condition (\ref{OO-thm-1}) of Theorem \ref{OO-thm} by the condition that every subgroup of $\Gamma$ which contains $N$ and has the finite cyclic quotient in $\Gamma/ N$ satisfies the Baum-Connes conjecture with coefficients.

In this paper, we employ a localization algebra approach to prove Theorem \ref{OO-thm} by the following three steps.

Step 1: we use equivariant Roe algebras and equivariant localization algebras to formulate the Baum-Connes conjecture with coefficients based on the work of Guentner, Willett and Yu in \cite{GWY-2024}(see Conjecture \ref{BCC}).

Step 2: we introduce the concept of equivariant localization algebras along one direction for group direct products (See Definition \ref{Def-Localg-along}), and use it to prove that the Baum-Connes conjecture is closed under direct products by the Mayer-Vietoris argument (see Theorem \ref{main-thm1}).

Step 3: we complete the proof of Theorem \ref{OO-thm} by translating the Baum-Connes conjecture with coefficients for extensions into the conjecture for direct products (see Section \ref{Sec-main-thm}).

The paper is organized as follows. In Section \ref{Sec-BC}, we recall some basic notions and properties of equivariant Roe algebras with coefficients and equivariant localization algebras with coefficients. And we also formulate the Baum-Connes conjecture with coefficients. In Section \ref{Sec-BCprod}, we first study equivariant Roe algebras for direct products. Then we define the concept of equivariant localization algebras along one direction for group direct products. Lastly, we verify that the Baum-Connes conjecture with coefficients is closed under direct products. Finally, we prove Theorem \ref{OO-thm} in Section \ref{Sec-main-thm}. 

\section*{Acknowledgements}
The author would like to thank Prof. Ralf Meyer and Prof. Shintaro Nishikawa for pointing out an essential mistake appeared in Theorem \ref{main-thm1} in the first version of the paper. 

\section{A description for the Baum-Connes conjecture with coefficients} \label{Sec-BC}
In this section, we will introduce the Baum-Connes conjecture with coefficients for discrete countable groups by using equivariant Roe algebras with coefficients and equivariant localization algebras with coefficients.

\subsection{Equivariant Roe algebras with coefficients}

%\begin{definition}\cite[Definition 1.1]{OyonoYu2015} \label{Def-fil-alg}
%	Let $A$ be a $C^{\ast}$-algebra. A \textit{filtration} on $A$ is a family of closed linear subspaces $(A_r)_{r>0}$ satisfying that
%	\begin{itemize}
%		\item $A_r$ is stable by involution;
%		\item $A_r \subseteq A_{r'}$ for $r\leq r'$;
%		\item $A_r A_{r'}\subseteq A_{r+r'}$;
%		\item $\cup_{r>0} A_r$ is dense in $A$.
%	\end{itemize}
%	If $A$ is unital, $1\in A_r$ is required for all $r>0$. A $C^{\ast}$-algebra equipped with a filtration is called a \textit{filtered $C^{\ast}$-algebra}.
%\end{definition}

%\begin{definition}\label{Def-fil-hom}
%	Let $A$ and $B$ be two filtered $C^{\ast}$-algebras equipped with the filtrations $(A_r)_{r>0}$ and $(B_r)_{r>0}$, respectively. A \textit{filtered homomorphism} is a $\ast$-homomorphism $\phi: A\rightarrow B$ satisfying that $\phi(A_r)\subseteq B_r$ for all $r>0$. A \textit{filtered isomorphism} is a filtered homomorphism which is a $\ast$-isomorphism.
%\end{definition}

Let $\Gamma$ be a discrete countable group and $X$ be a proper metric space (properness means that any bounded closed subset is compact). An isometric action of $\Gamma$ on $X$ is called to be \textit{proper}, if the set $\{\gamma\in \Gamma: \gamma K\cap K \neq \emptyset\}$ is finite for any compact subset $K$ in $X$. And an isometric action of $\Gamma$ on $X$ is called to be \textit{co-compact}, if the quotient space $X/\Gamma$ is compact.

\begin{definition}\label{Def-Gamma-C}
Let $\Gamma$ be a discrete countable group. 
  \begin{enumerate}
  	\item A proper metric space $X$ is called to be a \textit{$\Gamma$-space}, if there exists a proper, co-compact and isometric right $\Gamma$-action on $X$.
    \item A $C^{\ast}$-algebra $A$ is called to be a \textit{$\Gamma$-$C^{\ast}$-algebra}, if there exists a $\Gamma$-action on $A$ by $\ast$-automorphisms. 
  \end{enumerate}
\end{definition}

Let $\Gamma$ be a discrete countable group and $A$ be a $\Gamma$-$C^{\ast}$-algebra equipped with an action $\alpha$. Let $X$ be a $\Gamma$-space and $Z_X$ be a $\Gamma$-invariant countable dense subset in $X$. Let $H$ be a separable Hilbert space endowed with the trivial $\Gamma$-action.

Consider the following right Hilbert $A$-module
$$_{X}E_A=\ell^2(Z_X) \otimes A \otimes H \otimes \ell^2(\Gamma).$$
The right $A$-action on $_{X}E_A$ is defined by
$$(\xi \otimes a \otimes h \otimes \eta)\cdot a'=\xi \otimes aa' \otimes h \otimes \eta,$$
for any $a'\in A$. And the $A$-valued inner product on $_{X}E_A$ is defined to be 
$$\langle \xi_1 \otimes a_1 \otimes h_1 \otimes \eta_1, \xi_2 \otimes a_2 \otimes h_2 \otimes \eta_2 \rangle=\langle \xi_1, \xi_2 \rangle \langle h_1, h_2 \rangle \langle \eta_1, \eta_2 \rangle a_1^{\ast}a_2.$$
Define a $\Gamma$-action on $_{X}E_A$ by 
$$U_{\gamma}: _{X}E_A \rightarrow _{X}E_A, \:\: \delta_{x}\otimes a\otimes h\otimes \delta_{r'} \mapsto \delta_{x\gamma^{-1}}\otimes \alpha_{\gamma}(a)\otimes h\otimes \delta_{\gamma\gamma'},$$
for any $\gamma\in \Gamma$. Let $\mathcal{B}(X)$ be the algebra of all bounded Borel functions on $X$. Define a left action of $\mathcal{B}(X)$ on $_{X}E_A$ by
$$f\cdot (\xi\otimes a\otimes h\otimes \eta)=f\xi\otimes a\otimes h\otimes \eta,$$
for any $f\in \mathcal{B}(X)$. Then we have
$$U_{\gamma}aU_{\gamma^{-1}}=\alpha_{\gamma}(a)\:\:\text{and}\:\: U_{\gamma}fU_{\gamma^{-1}}=\gamma\cdot f,$$
for any $\gamma\in \Gamma$, $a\in A$ and $f\in \mathcal{B}(X)$, where $\gamma\cdot f(x)=f(x\gamma)$.

We denote by $\mathcal{L}(_{X}E_A)$ the algebra of all adjointable operators on $_{X}E_A$ and by $\mathcal{K}(_{X}E_A)$ the algebra of all compact operators on $_{X}E_A$. Then we have an isomorphism $\mathcal{K}(_{X}E_A)\cong \mathcal{K}(\ell^2(Z_X))\otimes \mathcal{K}(H)\otimes \mathcal{K}(\ell^2(\Gamma))\otimes A$. 

\begin{definition}\label{Def-prop}
	For $S\subseteq X$, let $\chi_{S}$ be an operator on $_{X}E_A$ defined by $\chi_{S}(\xi\otimes a\otimes h\otimes \eta)=\xi|_{S}\otimes a\otimes h\otimes \eta$. And let $T\in \mathcal{L}(_{X}E_A)$. 
	\begin{enumerate}
		\item The \textit{support} of $T$, denoted by $\supp(T)$, is defined to be
		$$\{(x,x')\in X\times X: \chi_{V} T \chi_{U}\neq 0 \:\: \text{for any open neighborhoods $U$ of $x$, $V$ of $x'$}\}.$$
		\item The \textit{propagation} of $T$, denoted by $\text{prop}(T)$, is defined to be
		$$\text{prop}(T)=\sup\{d(x,x'): (x,x')\in \supp(T)\}.$$  
		\item $T$ is called to be \textit{locally compact}, if $\chi_K T, T\chi_K\in \mathcal{K}(_{X}E_A)$ for any compact subset $K\subseteq X$.
		\item $T$ is called to be \textit{$\Gamma$-invariant}, if $U_{\gamma} T U_{\gamma^{-1}}=T$ for any $\gamma\in \Gamma$.
	\end{enumerate}
\end{definition}

\begin{remark}
	For any $T\in \mathcal{L}(_{X}E_A)$, we can represent $T$ as a matrix $(T_{x,x'})_{x,x'\in Z_X}$, where $T_{x,x'}=\chi_{x} T \chi_{x'}$. Then, $T$ has finite propagation if and only if there exists $R>0$ such that $T_{x,x'}=0$ for all $x,x'\in Z_X$ with $d(x,x')>R$. And if $T_{x,x'}=a_{x,x'}\otimes K_{x,x'} \otimes Q_{x,x'}$ for some $a_{x,x'}\in A$, $K_{x,x'}\in \mathcal{K}(H)$ and $Q_{x,x'}\in \mathcal{K}(\ell^2(\Gamma))$. Then $T$ is $\Gamma$-invariant if and only if $a_{x,x'}=\alpha_{\gamma}(a_{x\gamma, x'\gamma})$, $K_{x,x'}=K_{x\gamma, x'\gamma}$ and $Q_{x,x'}=W_{\gamma} Q_{x\gamma, x'\gamma} W_{\gamma^{-1}}$ for any $\gamma\in \Gamma$, where $W_{\gamma}: \ell^2(\Gamma) \rightarrow \ell^2(\Gamma),\: \delta_{\gamma'}\mapsto \delta_{\gamma\gamma'}$.
\end{remark}

\begin{definition}\label{Def-eqRoecoe}
	Let $\Gamma$ be a discrete countable group, $X$ be a $\Gamma$-space and $A$ be a $\Gamma$-$C^{\ast}$-algebra. Define $\mathbb{C}[X, A]^{\Gamma}$ to be the $\ast$-algebra consisting of all $\Gamma$-invariant, locally compact operators with finite propagation.
    The \textit{equivariant Roe algebra with coefficients} in $A$ of $X$, denoted by $C^{\ast}(X, A)^{\Gamma}$, is defined to be the norm closure of $\mathbb{C}[X, A]^{\Gamma}$ in $\mathcal{L}(_{X}E_A)$. 
\end{definition}

	In the $\ast$-isomorphic sense, the definition of equivariant Roe algebra with coefficients does not depend on the choice of $\Gamma$-invariant dense subsets $Z_X$ (see \cite[Section 5.2]{WillettYu-Book}).

\begin{remark}
The Roe algebra was first introduced by Roe in \cite{Roe1988}, whose $K$-theory provides a receptacle for higher indices of elliptic differential operators on open manifolds.
\end{remark}

\begin{definition}\label{Def-equi-coarsemap}
	Let $X$ and $Y$ be two $\Gamma$-spaces. A map $f: X\rightarrow Y$ is called to be an \textit{equivariant coarse map}, if 
	  \begin{itemize}
	  	\item $f(x\gamma)=f(x)\gamma$ for all $x\in X$ and $\gamma\in \Gamma$;
	  	\item $f^{-1}(K)$ is pre-compact for any compact subset $K\subseteq Y$;
	  	\item for any $R>0$, there exists $S>0$ such that $d(f(x), f(x'))\leq S$ for any $x,x'\in X$ with $d(x,x')\leq R$.
	  \end{itemize}
	  
	  $X$ and $Y$ are called to be \textit{equivariantly coarsely equivalent}, if there exist two equivariant coarse maps $f: X\rightarrow Y$, $g: Y\rightarrow X$ and a constant $C>0$ such that
	  $$\max\{d(gf(x), x), d(fg(y), y)\}\leq C,$$
	  for all $x\in X$ and $y\in Y$.  
\end{definition}

\begin{lemma}\label{Lem-coveringmap}\cite[Theorem 5.2.6]{WillettYu-Book}
	Let $X$, $Y$ be two $\Gamma$-spaces and $A$ be a $\Gamma$-$C^{\ast}$-algebra. Assume that $f: X\rightarrow Y$ be an equivariant coarse map. Then $f$ leads to a $\ast$-homomorphism from $C^{\ast}(X, A)^{\Gamma}$ to $C^{\ast}(Y, A)^{\Gamma}$, and hence induces a homomorphism
	  $$f_{\ast}: K_{\ast}(C^{\ast}(X, A)^{\Gamma}) \rightarrow K_{\ast}(C^{\ast}(Y, A)^{\Gamma}).$$
\end{lemma}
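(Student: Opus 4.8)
The plan is to realize the induced map as conjugation by a suitable \emph{covering isometry}, following the classical scheme for functoriality of Roe algebras. Concretely, I would first construct a $\Gamma$-equivariant adjointable isometry $V \colon {}_{X}E_A \to {}_{Y}E_A$ of Hilbert $A$-modules whose support is controlled by the graph of $f$, in the sense that there is a constant $C>0$ with
\[
\supp(V) \subseteq \{(y,x)\in Y\times X : d(y, f(x)) \leq C\}.
\]
Once such a $V$ is available, the desired $\ast$-homomorphism will be $\mathrm{Ad}_V \colon T \mapsto VTV^{\ast}$, and the induced $K$-theory map is then automatic from functoriality of $K$-theory for $\ast$-homomorphisms.

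The construction of $V$ is where the real work lies, and I expect it to be the main obstacle. Three features of ${}_{X}E_A$ and ${}_{Y}E_A$ must be exploited simultaneously. First, because $Z_Y$ is dense in $Y$, one can approximate $f$ on the points of $Z_X$ by an assignment into $Z_Y$ at bounded distance, which is what produces the support estimate above. Second, this point-assignment approximating $f$ need not be injective, so to obtain an honest isometry rather than merely a partial isometry I would use the infinite multiplicity supplied by the separable infinite-dimensional factor $H$: at each target point of $Z_Y$ one distributes the (at most countably many, since the $f$-preimage of a bounded set is precompact) preimages onto mutually orthogonal vectors of $H$. Third, to make $V$ commute with the actions $U_\gamma$ one uses the free regular-representation factor $\ell^2(\Gamma)$ together with the equivariance $f(x\gamma) = f(x)\gamma$: after the standard untwisting of the diagonal action on $\ell^2(Z_X)\otimes \ell^2(\Gamma)$ into a factor carrying the regular representation, building a $\Gamma$-equivariant $V$ reduces to defining it on a fundamental domain of the cocompact action and extending by $\Gamma$, which circumvents the difficulty caused by the possibly nontrivial (finite) stabilizers of the $\Gamma$-action on $X$.

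With $V$ in hand, checking that $\mathrm{Ad}_V$ carries $\mathbb{C}[X,A]^{\Gamma}$ into $\mathbb{C}[Y,A]^{\Gamma}$ is routine support bookkeeping. If $T$ has propagation at most $R$, then the support estimate for $V$ together with the coarseness of $f$ (the third bullet of Definition \ref{Def-equi-coarsemap}) bounds the propagation of $VTV^{\ast}$ by $2C + S(R)$, so finite propagation is preserved. For local compactness, given a compact $K\subseteq Y$ the operator $\chi_K V$ factors as $\chi_K V \chi_L$ with $L = \overline{f^{-1}(N_C(K))}$ compact by properness of $f$; since $\chi_L T$ is compact by local compactness of $T$ and $\mathcal{K}({}_{X}E_A)$ is an ideal, $\chi_K VTV^{\ast} = (\chi_K V)(\chi_L T) V^{\ast}$ is compact, and symmetrically $VTV^{\ast}\chi_K$ is compact. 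Finally, $\Gamma$-invariance is immediate from $VU_\gamma = U_\gamma V$ and $U_\gamma T U_{\gamma^{-1}} = T$.

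It then remains to observe that $\mathrm{Ad}_V$ is a contractive $\ast$-homomorphism: multiplicativity uses $V^{\ast}V = 1$, and contractivity uses that $V$ is an isometry, so $\mathrm{Ad}_V$ extends by continuity to a $\ast$-homomorphism $C^{\ast}(X,A)^{\Gamma} \to C^{\ast}(Y,A)^{\Gamma}$ and hence induces the map $f_{\ast}$ on $K$-theory. I would also remark that although the covering isometry is not unique, any two choices are joined by a controlled homotopy, so the induced $K$-theory map does not depend on $V$; this is not strictly needed for the existence statement of the lemma, but it is what makes the assignment $f\mapsto f_{\ast}$ functorial in the later applications.
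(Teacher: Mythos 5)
Your proposal is correct and follows essentially the same route as the source the paper cites for this lemma (Willett--Yu, Theorem 5.2.6): construct a $\Gamma$-equivariant covering isometry $V$ with support controlled by the graph of $f$, using the infinite multiplicity of $H$ to absorb non-injectivity and the $\ell^2(\Gamma)$ factor to handle finite stabilizers, and then take $\mathrm{Ad}_V$. The support, local-compactness, and invariance bookkeeping you give is the standard argument, and your closing remark about independence of the choice of $V$ is exactly what makes $f_{\ast}$ well defined and functorial.
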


\begin{corollary}\label{Cor-coarseequi-K}
	If $X$ is equivariantly coarsely equivalent to $Y$, then $C^{\ast}(X, A)^{\Gamma}$ is naturally isomorphic to $C^{\ast}(Y, A)^{\Gamma}$.
\end{corollary}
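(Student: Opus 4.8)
The plan is to realise the coarse equivalence at the level of the underlying Hilbert modules by a single $\Gamma$-equivariant \emph{covering unitary}, and to let the desired isomorphism be the associated spatial conjugation. Fix equivariant coarse maps $f\colon X\to Y$ and $g\colon Y\to X$ and a constant $C>0$ as in Definition \ref{Def-equi-coarsemap}, so that $d(gf(x),x)\le C$ and $d(fg(y),y)\le C$ for all $x\in X$ and $y\in Y$. Recall that the $*$-homomorphism produced by Lemma \ref{Lem-coveringmap} is implemented by conjugation by a $\Gamma$-equivariant covering isometry; the heart of the corollary is to improve this isometry to a unitary when $f$ is a coarse equivalence, and then to read off the isomorphism from it.

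Suppose first that we are given a $\Gamma$-equivariant unitary $V\in\mathcal{L}({}_{X}E_A,{}_{Y}E_A)$ whose support lies within a bounded distance $S$ of the graph $\{(f(x),x):x\in X\}$ of $f$. Then I would check that $\mathrm{Ad}(V)\colon T\mapsto VTV^{\ast}$ maps $C^{\ast}(X,A)^{\Gamma}$ onto $C^{\ast}(Y,A)^{\Gamma}$: equivariance is preserved because $V$ commutes with the $\Gamma$-actions; local compactness is preserved because $V$ has the required support control; and $\prop(VTV^{\ast})$ is bounded by $\prop(T)$ plus $2S$ and the coarse modulus of $f$, so finite propagation is preserved. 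As $V$ is unitary, $\mathrm{Ad}(V)$ is a $*$-homomorphism, and its inverse is $\mathrm{Ad}(V^{\ast})$; since $V^{\ast}$ is a covering unitary for $g$ (its support lies near the graph $\{(x,f(x))\}$, which is within bounded distance of the graph of $g$ because $gf(x)$ is $C$-close to $x$), $\mathrm{Ad}(V^{\ast})$ carries $C^{\ast}(Y,A)^{\Gamma}$ back into $C^{\ast}(X,A)^{\Gamma}$. Hence $\mathrm{Ad}(V)$ is the sought $*$-isomorphism, and it is natural in $A$ because $V$ is built independently of the coefficient algebra beyond the fixed $\Gamma$-action.

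It therefore remains to construct such a covering unitary $V$. Starting from a covering isometry $V_f$ for $f$ and a covering isometry $V_g$ for $g$ supplied by (the proof of) Lemma \ref{Lem-coveringmap}, the operators $V_fV_f^{\ast}$ and $V_gV_g^{\ast}$ are $\Gamma$-invariant, finite-propagation projections, while $V_gV_f$ and $V_fV_g$ are covering isometries for $g\circ f$ and $f\circ g$, both of which are within distance $C$ of the respective identity maps. The idea is to use the infinite-multiplicity factor $H$ in ${}_{X}E_A=\ell^2(Z_X)\otimes A\otimes H\otimes\ell^2(\Gamma)$ to absorb the defect projections $1-V_fV_f^{\ast}$ and $1-V_f^{\ast}V_f$ into the ``extra room'' provided by $H$, thereby correcting $V_f$ to a genuine $\Gamma$-equivariant unitary $V$ with support enlarged only by a bounded amount; equivalently, one shows that the two defect projections are $\Gamma$-equivariantly and with finite propagation Murray--von Neumann equivalent inside $\mathcal{L}({}_{X}E_A)$ and $\mathcal{L}({}_{Y}E_A)$.

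The main obstacle is exactly this unitarisation: spatial conjugation by a mere covering isometry is only an injective endomorphism, and producing the correcting equivalence \emph{$\Gamma$-equivariantly}, \emph{with finite propagation}, and \emph{compatibly with the coefficient algebra $A$} is the one nontrivial point, since all three constraints must be met simultaneously. This is the equivariant, coefficient-level analogue of the classical statement that a coarse equivalence induces a $*$-isomorphism of ample Roe algebras; once the covering unitary is in hand, the verifications in the second paragraph are routine (cf. \cite[Section 5.2]{WillettYu-Book}). An alternative packaging, which uses Lemma \ref{Lem-coveringmap} only as a black box, is to observe by functoriality that $\mathrm{Ad}(V_gV_f)$ and $\mathrm{Ad}(V_fV_g)$ are the homomorphisms induced by $g\circ f$ and $f\circ g$, to show that a covering isometry for the identity induces an automorphism, and then to conclude that $\mathrm{Ad}(V_f)$ admits a two-sided inverse; but this still relies on the same unitarisation as its essential input.
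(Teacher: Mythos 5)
Your proof is correct and follows essentially the same route the paper relies on: the paper offers no independent argument for this corollary, deferring to \cite[Theorem 5.2.6]{WillettYu-Book}, whose proof is exactly the upgrade of a $\Gamma$-equivariant covering isometry to a covering unitary (using the ample factor $H\otimes\ell^2(\Gamma)$ in ${}_{X}E_A$) followed by spatial conjugation. You correctly isolate the unitarisation as the only nontrivial step, so nothing further is needed.
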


Now, we consider the relationship between the equivariant Roe algebras with coefficients and the reduced crossed product. Let $X$ be a $\Gamma$-space and $A$ be a $\Gamma$-$C^{\ast}$-algebra. Since the action of $\Gamma$ on $X$ is co-compact, the map $\Gamma \rightarrow X$ defined by $\gamma \mapsto x_0 \gamma$ is an equivariantly coarsely equivalent map, where $x_0\in X$ is a fixed point. Hence, by Corollary \ref{Cor-coarseequi-K}, $C^{\ast}(X, A)^{\Gamma}$ is naturally $\ast$-isomorphic to $C^{\ast}(\Gamma, A)^{\Gamma}$ which naturally $\ast$-isomorphic to $(A \rtimes_r \Gamma) \otimes \mathcal{K}(H)$. Therefore, we have the following lemma.

\begin{lemma}(\cite[Lemma B.12]{GWY-2024} or \cite[Theorem 5.3.2]{WillettYu-Book})\label{Lem-eqRoe-crossprod}
	Let $X$ be a $\Gamma$-space and $A$ be a $\Gamma$-$C^{\ast}$-algebra. Then $C^{\ast}(X, A)^{\Gamma}$ is naturally $\ast$-isomorphic to $(A \rtimes_r \Gamma) \otimes \mathcal{K}(H)$.
\end{lemma}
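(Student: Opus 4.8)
The plan is to prove the lemma in two stages: first reduce the geometric input space $X$ to the group $\Gamma$ itself, and then compute the equivariant Roe algebra of $\Gamma$ directly. Throughout, $\Gamma$ is regarded as a $\Gamma$-space via right translation, equipped with a proper, right-invariant word metric.

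First I would fix a basepoint $x_0 \in X$ and consider the orbit map $\iota : \Gamma \to X$, $\gamma \mapsto x_0\gamma$. Properness and co-compactness of the $\Gamma$-action guarantee that $\iota$ is an equivariant coarse equivalence: co-compactness makes $\iota$ coarsely surjective, while properness controls the preimages of compact sets and supplies the required coarse Lipschitz bounds. By Corollary \ref{Cor-coarseequi-K} this yields a natural $\ast$-isomorphism $C^{\ast}(X,A)^{\Gamma} \cong C^{\ast}(\Gamma, A)^{\Gamma}$, so it suffices to treat the case $X = \Gamma$ with $Z_X = \Gamma$.

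Next I would produce the isomorphism $C^{\ast}(\Gamma, A)^{\Gamma} \cong (A \rtimes_r \Gamma) \otimes \mathcal{K}(H)$ by reducing invariant operators to a single orbit row and untwisting the action. Working on the Hilbert module ${}_{\Gamma}E_A = \ell^2(\Gamma)\otimes A \otimes H \otimes \ell^2(\Gamma)$ and using the matrix description preceding Definition \ref{Def-eqRoecoe}, a $\Gamma$-invariant operator $T$ satisfies $T_{x,x'} = \alpha_{x^{-1}}(a_{e, x'x^{-1}}) \otimes K_{e, x'x^{-1}} \otimes W_{x^{-1}} Q_{e, x'x^{-1}} W_{x}$; since $Z_X = \Gamma$ is a single orbit, $T$ is determined by its $e$-row $\{ a_{e,\tau}\otimes K_{e,\tau}\otimes Q_{e,\tau} \}$, and finite propagation forces this row to be finitely supported in $\tau \in \Gamma$. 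Introducing the unitary on $\ell^2(\Gamma)\otimes\ell^2(\Gamma)$ implementing $\delta_\gamma\otimes\delta_{\gamma'}\mapsto \delta_\gamma\otimes\delta_{\gamma^{-1}\gamma'}$ straightens out the conjugation by $W$ in the $\mathcal{K}(\ell^2\Gamma)$ slot, so that the diagonal action becomes supported on a single tensor factor and invariant operators become fixed tensors. Tracking support and local-compactness conditions, the algebraic level $\mathbb{C}[\Gamma, A]^{\Gamma}$ is carried isomorphically onto the algebraic crossed product $A \rtimes_{\mathrm{alg}} \Gamma$ amplified by the finite-rank operators on $H\otimes\ell^2(\Gamma)$, hence densely into $(A\rtimes_r\Gamma)\otimes\mathcal{K}(H)$ after absorbing $\ell^2(\Gamma)\otimes H \cong H$.

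Finally I would verify that this algebraic $\ast$-isomorphism is isometric for the two ambient norms, so that it extends to the norm closures. The key point is that, after untwisting, the representation of $\mathbb{C}[\Gamma, A]^{\Gamma}$ on ${}_{\Gamma}E_A$ is precisely the regular representation of $A \rtimes_{\mathrm{alg}} \Gamma$ on $\ell^2(\Gamma)\otimes A$ (amplified by $H\otimes\ell^2(\Gamma)$), which by definition computes the reduced crossed product norm. I expect this last step to be the main subtlety, since it is exactly where the reduced (rather than full) norm enters; everything else is bookkeeping with matrix coefficients and support conditions. As both reductions are visibly natural in $X$ and $A$, the resulting $\ast$-isomorphism is natural, as claimed.
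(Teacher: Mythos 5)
Your proposal is correct and matches the paper's approach: the paper likewise reduces to $X=\Gamma$ via the orbit map $\gamma\mapsto x_0\gamma$ (using properness and co-compactness to get an equivariant coarse equivalence and then Corollary \ref{Cor-coarseequi-K}), and then identifies $C^{\ast}(\Gamma,A)^{\Gamma}$ with $(A\rtimes_r\Gamma)\otimes\mathcal{K}(H)$ by citing \cite[Lemma B.12]{GWY-2024} and \cite[Theorem 5.3.2]{WillettYu-Book}, whose standard proof is exactly the row-at-the-identity/untwisting computation you describe. The only cosmetic imprecision is that local compactness gives matrix entries in $A\otimes\mathcal{K}(H\otimes\ell^2(\Gamma))$ rather than finite-rank ones, but this does not affect the density and norm arguments.
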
  

\subsection{Equivariant localization algebras with coefficients}

\begin{definition}\label{Def-localg}
	Let $X$ be a $\Gamma$-space and $A$ be a $\Gamma$-$C^{\ast}$-algebra. The \textit{equivariant localization algebra with coefficients} in $A$ of $X$, denoted by $C^{\ast}_{L}(X, A)^{\Gamma}$, is define to be the norm closure of the $\ast$-algebra consisting of all bounded and uniformly continuous functions $u: [0,\infty) \rightarrow C^{\ast}(X, A)^{\Gamma}$ such that
	  $$\lim_{t\rightarrow \infty}\prop(u(t))=0.$$
\end{definition}

\begin{remark}
	The localization algebra was first introduced by Yu in \cite{Yu-Localizationalg} whose $K$-theory provides an analytic model for $K$-homology (cf. \cite{QiaoRoe}\cite[Chapter 6]{WillettYu-Book}).
\end{remark}

Then, we have the following lemma, please refer to \cite[Lemma 3.4]{Yu-Localizationalg} or \cite[Theorem 6.6.3]{WillettYu-Book} for the proof.
\begin{lemma}\label{Lem-conticovmap}
	Let $X$, $Y$ be two $\Gamma$-spaces and $A$ be a $\Gamma$-$C^{\ast}$-algebra. Assume that $f: X \rightarrow Y$ be a uniformly continuous, equivariant coarse map. Then $f$ induces a homomorphism
	  $$f_{\ast}: K_{\ast}(C^{\ast}_{L}(X,A)^{\Gamma}) \rightarrow K_{\ast}(C^{\ast}_{L}(Y, A)^{\Gamma}).$$
\end{lemma}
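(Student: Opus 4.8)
The plan is to realize $f_{\ast}$ through conjugation by a family of $\Gamma$-equivariant \emph{covering isometries}, following the strategy used for the Roe-algebra statement in Lemma \ref{Lem-coveringmap} (and in \cite[Lemma 3.4]{Yu-Localizationalg}, \cite[Theorem 6.6.3]{WillettYu-Book}) but now carrying along the localization parameter $t$. First I would build, for each scale $\epsilon>0$, a $\Gamma$-equivariant adjointable isometry $V^{\epsilon}\colon {}_{X}E_A \to {}_{Y}E_A$ that \emph{covers $f$ at scale $\epsilon$}, meaning $\supp(V^{\epsilon}) \subseteq \{(y,x)\in Y\times X : d(y,f(x))\le \epsilon\}$. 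Since $Z_Y$ is dense in $Y$, I can pick for each $x\in Z_X$ a point $\bar f_{\epsilon}(x)\in Z_Y$ with $d(\bar f_{\epsilon}(x),f(x))\le \epsilon$, and arrange $\bar f_{\epsilon}$ to be $\Gamma$-equivariant by choosing it on orbit representatives of $Z_X$ and extending via $\bar f_{\epsilon}(x\gamma)=\bar f_{\epsilon}(x)\gamma$ (legitimate because $Z_Y$ is $\Gamma$-invariant and the action is isometric). Setting $V^{\epsilon}(\delta_x\otimes a\otimes h\otimes \delta_{\gamma'})=\delta_{\bar f_{\epsilon}(x)}\otimes a\otimes W_x h\otimes \delta_{\gamma'}$, where the infinite-dimensional factor $H$ (with trivial $\Gamma$-action) and equivariantly chosen isometries $W_x$ are used to turn a possibly non-injective $\bar f_{\epsilon}$ into a genuine isometry, gives the desired $V^{\epsilon}$; equivariance of $V^{\epsilon}$ follows from that of $\bar f_{\epsilon}$.

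Next I would analyze conjugation at the chain level. A single $V^{\epsilon}$ induces a $\ast$-homomorphism $C^{\ast}(X,A)^{\Gamma}\to C^{\ast}(Y,A)^{\Gamma}$ because $(V^{\epsilon})^{\ast}V^{\epsilon}=1$: for $T$ of propagation $R$ one has $\prop\bigl(V^{\epsilon}T(V^{\epsilon})^{\ast}\bigr)\le 2\epsilon+S(R)$, where $S$ is the control function of the coarse map $f$; local compactness is preserved because $f^{-1}(K)$ is pre-compact for compact $K$, and $\Gamma$-invariance is clear from equivariance of $V^{\epsilon}$. The crucial obstruction to using a fixed isometry is that the $2\epsilon$ term does not vanish: for $u\in C^{\ast}_{L}(X,A)^{\Gamma}$ the path $t\mapsto V^{\epsilon}u(t)(V^{\epsilon})^{\ast}$ has propagation bounded below away from $0$, so it fails the condition $\lim_{t\to\infty}\prop=0$ and does not lie in $C^{\ast}_{L}(Y,A)^{\Gamma}$. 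I would therefore let the scale shrink with $t$: choose a family $\{V_t\}_{t\ge 0}$ of covering isometries whose covering constants $C_t$ tend to $0$, and define $(f_{\ast}u)(t)=V_t u(t) V_t^{\ast}$. Then $\prop\bigl(V_t u(t)V_t^{\ast}\bigr)\le 2C_t+S(\prop u(t))\to 0$, using both $C_t\to 0$ and, decisively, the \emph{uniform continuity} of $f$, which forces $S(\prop u(t))\to 0$ as $\prop u(t)\to 0$. Each $V_t$ being an isometry keeps $u\mapsto f_{\ast}u$ multiplicative and $\ast$-preserving at every fixed $t$.

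The hard part will be arranging the family $\{V_t\}$ so that $t\mapsto V_t u(t)V_t^{\ast}$ is bounded and uniformly continuous, hence genuinely lands in $C^{\ast}_{L}(Y,A)^{\Gamma}$, while the covering constants still shrink to $0$; reconciling the $t$-dependence of $V_t$ with membership in the localization algebra is exactly what fails for a fixed isometry. I would construct $\{V_t\}$ to be strongly continuous and slowly varying, interpolating on each interval $[n,n+1]$ between covering isometries of scales $1/n$ and $1/(n+1)$ through a strongly continuous path of isometries in the stably infinite module ${}_{Y}E_A$, keeping the covering constant at most $1/n$ throughout. Uniform continuity of the conjugated path then follows by combining joint continuity of $(s,t)\mapsto V_s u(t)V_s^{\ast}$ with the uniform continuity and boundedness of $u$ together with the finite propagation of each $u(t)$.

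Finally I would establish that the induced map on $K$-theory is independent of all choices, which yields both well-definedness and functoriality. Any two admissible families $\{V_t\}$ and $\{V'_t\}$ of covering isometries for $f$ can be connected through covering isometries, for instance via the rotation homotopy $\theta\mapsto (\cos\theta)\,V_t\oplus(\sin\theta)\,V'_t$ inside a doubled copy of the module, producing a homotopy of $\ast$-homomorphisms into $C^{\ast}_{L}(Y,A)^{\Gamma}$ and hence the same map on $K_{\ast}$. This gives the well-defined homomorphism $f_{\ast}\colon K_{\ast}(C^{\ast}_{L}(X,A)^{\Gamma})\to K_{\ast}(C^{\ast}_{L}(Y,A)^{\Gamma})$, completing the argument.
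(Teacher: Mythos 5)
Your proposal is correct and follows essentially the same route as the proof the paper cites for this lemma (\cite[Lemma 3.4]{Yu-Localizationalg}, \cite[Theorem 6.6.3]{WillettYu-Book}) --- equivariant covering isometries at shrinking scales, conjugation $u(t)\mapsto V_t u(t)V_t^{\ast}$ with the uniform continuity of $f$ guaranteeing the propagation condition, and a rotation homotopy in a doubled module to get independence of choices --- and the paper itself offers no independent argument. The one step stated too casually is the equivariant choice of $\bar f_{\epsilon}$ on orbit representatives: at a point $x\in Z_X$ with nontrivial finite stabilizer $\Gamma_x$ one needs $\bar f_{\epsilon}(x)$ to be a $\Gamma_x$-fixed point of $Z_Y$ near $f(x)$, which need not exist, so the covering isometry must instead be built using the ample module structure (the $H$ and $\ell^2(\Gamma)$ tensor factors) as in the cited references.
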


We have the following two homological properties for the $K$-theory of localization algebras. Please refer to \cite[Theorem 6.3.4, Theorem 6.4.16 and Proposition 6.6.2]{WillettYu-Book} for their proofs.
\begin{definition}\label{Def-homotopy}
	Let $f^{(0)}, f^{(1)}: X\rightarrow Y$ be two uniformly continuous, equivariant coarse maps. $f^{(0)}$ is called to be \textit{equivariantly homotopic} to $f^{(1)}$, if there exists a uniformly continuous, equivariant coarse map $h: X\times [0,1]\rightarrow Y$ such that $h(x,0)=f^{(0)}(x)$ and $h(x,1)=f^{(1)}(x)$ for all $x\in X$.
	
	$X$ is called to be \textit{equivariantly homotopy equivalent} to $Y$, if there exist two uniformly continuous, equivariant coarse maps $f_1: X\rightarrow Y$ and $f_2: Y\rightarrow X$ such that $f_2f_1$ and $f_1f_2$ are equivariantly homotopic to $id_{X}$ and $id_{Y}$, respectively. 
\end{definition}
\begin{lemma}\label{Lem-Liphtp}
	Let $X$, $Y$ and $A$ be as above. If $X$ is equivariantly homotopy equivalent to $Y$, then $K_{\ast}(C^{\ast}_{L}(X, A)^{\Gamma})$ is naturally isomorphic to $K_{\ast}(C^{\ast}_{L}(Y, A)^{\Gamma})$.
\end{lemma}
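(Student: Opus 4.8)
The plan is to deduce the statement from two ingredients: the functoriality already recorded in Lemma~\ref{Lem-conticovmap}, and a homotopy invariance principle asserting that equivariantly homotopic maps induce the \emph{same} homomorphism on the $K$-theory of localization algebras. Granting the latter, the argument is formal. By Definition~\ref{Def-homotopy} the equivariant homotopy equivalence supplies uniformly continuous equivariant coarse maps $f_1\colon X\to Y$ and $f_2\colon Y\to X$ with $f_2f_1\simeq \mathrm{id}_X$ and $f_1f_2\simeq \mathrm{id}_Y$. Since the assignment $f\mapsto f_{\ast}$ of Lemma~\ref{Lem-conticovmap} is multiplicative and sends the identity map to the identity homomorphism, homotopy invariance yields $(f_2)_{\ast}(f_1)_{\ast}=(f_2f_1)_{\ast}=\mathrm{id}$ and $(f_1)_{\ast}(f_2)_{\ast}=(f_1f_2)_{\ast}=\mathrm{id}$. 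Hence $(f_1)_{\ast}$ is the desired natural isomorphism, with inverse $(f_2)_{\ast}$.

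Everything is thus concentrated in proving homotopy invariance: if $f^{(0)},f^{(1)}\colon X\to Y$ are equivariantly homotopic, then $(f^{(0)})_{\ast}=(f^{(1)})_{\ast}$ on $K_{\ast}(C^{\ast}_{L}(\,\cdot\,,A)^{\Gamma})$. My first move is to package the homotopy geometrically. Let $h\colon X\times[0,1]\to Y$ be a witnessing map, equip $X\times[0,1]$ with the product metric and the $\Gamma$-action that is trivial on $[0,1]$ (so it is again a $\Gamma$-space, since $[0,1]$ is compact), and let $e_i\colon X\to X\times[0,1]$, $e_i(x)=(x,i)$, for $i=0,1$, together with the projection $p\colon X\times[0,1]\to X$. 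All of these are uniformly continuous equivariant coarse maps, and $h\circ e_i=f^{(i)}$ while $p\circ e_i=\mathrm{id}_X$. Functoriality then gives $(f^{(i)})_{\ast}=h_{\ast}(e_i)_{\ast}$ and $p_{\ast}(e_i)_{\ast}=\mathrm{id}$. Consequently it suffices to prove that $p_{\ast}$ is an isomorphism: this forces $(e_0)_{\ast}=(e_1)_{\ast}=(p_{\ast})^{-1}$, whence $(f^{(0)})_{\ast}=(f^{(1)})_{\ast}$.

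The analytic heart, and the step I expect to be the main obstacle, is therefore showing that the interval projection induces an isomorphism. Because $p\circ e_0=\mathrm{id}_X$ already gives $p_{\ast}(e_0)_{\ast}=\mathrm{id}$, it remains only to establish the single identity $(e_0\circ p)_{\ast}=(e_0)_{\ast}p_{\ast}=\mathrm{id}$ on $K_{\ast}(C^{\ast}_{L}(X\times[0,1],A)^{\Gamma})$, i.e.\ that the straight-line contraction $e_0\circ p\simeq \mathrm{id}_{X\times[0,1]}$ collapsing the interval is seen as the identity. The natural route is via covering isometries: a uniformly continuous equivariant coarse map is implemented on the Hilbert module ${}_{X}E_A$ by a $\Gamma$-equivariant, $A$-linear family of isometries $\{V_t\}_{t\ge 0}$ with controlled propagation tending to $0$, and $f_{\ast}$ is induced by the $\ast$-homomorphism $u\mapsto (V_t\,u(t)\,V_t^{\ast})_t$; conjugation by a \emph{norm-continuous path} of such covering families induces the same map on $K$-theory. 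Using the uniform continuity of the contraction, I would subdivide $[0,1]$ into finitely many subintervals on which the interpolating maps are uniformly close, build a continuous path of covering isometries across each subinterval, and concatenate. The delicate points are (i) arranging the path of isometries to remain $\Gamma$-equivariant and adjointable over $A$ while keeping $\prop\to 0$ \emph{uniformly} along the path, so that it stays inside $C^{\ast}_{L}(\,\cdot\,,A)^{\Gamma}$ in the sense of Definition~\ref{Def-localg}, and (ii) carrying the coefficient algebra $A$ through the equivariance bookkeeping. Once this continuous conjugating path is in place, the induced homotopy of $\ast$-homomorphisms yields $(e_0\circ p)_{\ast}=\mathrm{id}$, proving that $p_{\ast}$ is an isomorphism and completing the argument.
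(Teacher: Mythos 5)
The paper itself offers no proof of this lemma: it is stated with a pointer to \cite{WillettYu-Book} (Theorems 6.3.4 and 6.4.16), so your attempt should be measured against that cited argument. Your formal skeleton --- functoriality from Lemma \ref{Lem-conticovmap}, packaging the homotopy on the cylinder $X\times[0,1]$ (which is indeed again a $\Gamma$-space), and the observation that everything reduces to $p_{\ast}$ being an isomorphism, equivalently to $(e_0\circ p)_{\ast}=\mathrm{id}$ --- is exactly the standard route and is correct as far as it goes.

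The gap is in the analytic heart. You propose to prove $(e_0\circ p)_{\ast}=\mathrm{id}$ by cutting $[0,1]$ into \emph{finitely many} subintervals on which the interpolating maps are uniformly close, and connecting the corresponding covering isometries by continuous paths. Closeness of maps is the right notion for Roe algebras, but it is not enough for localization algebras: covering isometries are built by shuffling basis vectors of $\ell^2(Z_X)$ according to a partition, so they cannot in general be chosen to vary norm-continuously with the covered map, and the usual substitute (a rotation $V_0\cos\theta+V_1\sin\theta$ between isometries covering two maps at uniform distance $\epsilon>0$) produces cross terms $V_0\,u(t)\,V_1^{\ast}$ whose propagation stays of order $\epsilon$ for \emph{all} $t$. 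Since your subdivision is finite, $\epsilon$ is a fixed positive constant, so the interpolated path leaves $C^{\ast}_{L}$, whose elements must have propagation tending to $0$; the condition you yourself flag in point (i) cannot be met this way. The missing idea --- the actual content of Yu's strong Lipschitz homotopy invariance and of the proof of Theorem 6.4.16 in \cite{WillettYu-Book} --- is to traverse the contraction at a rate that slows to zero as $t\to\infty$, so that at time $t$ one only ever compares maps whose mutual distance also tends to $0$, and to assemble the resulting infinitely many stages by an Eilenberg swindle on an amplified module $\ell^2(\mathbb{N})\otimes{}_{X}E_A$. With that replacement (and the routine check that the swindle respects $\Gamma$-equivariance and $A$-linearity) the rest of your outline goes through.
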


\begin{lemma}\label{Lem-equiMV}
	If $\{X_1, X_2\}$ is a cover of $X$ by $\Gamma$-invariant closed subsets, then the following Mayer-Vietoris six-term exact sequence holds
	$$\small\xymatrix{
		K_0(C^{\ast}_{L}(X_1\cap X_2, A)^{\Gamma}) \ar[r] & 
		K_0(C^{\ast}_{L}(X_1, A)^{\Gamma})\oplus K_0(C^{\ast}_{L}(X_2, A)^{\Gamma}) \ar[r] & 
		K_0(C^{\ast}_{L}(X, A)^{\Gamma})\ar[d] \\
		K_1(C^{\ast}_{L}(X, A)^{\Gamma})\ar[u] &
		K_1(C^{\ast}_{L}(X_1, A)^{\Gamma})\oplus K_1(C^{\ast}_{L}(X_2, A)^{\Gamma})\ar[l] &
		K_1(C^{\ast}_{L}(X_1\cap X_2, A)^{\Gamma}).\ar[l] 
	}$$
\end{lemma}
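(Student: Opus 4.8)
The plan is to realize the six-term sequence as the Mayer--Vietoris sequence associated with a pair of closed two-sided ideals of $B := C^{\ast}_{L}(X, A)^{\Gamma}$ whose sum is all of $B$ and whose intersection is the ideal attached to $X_1 \cap X_2$. First I would fix the $\Gamma$-invariant countable dense subset $Z_X$ so that $Z_X \cap X_i$ is dense in $X_i$ for $i=1,2$; since the $X_i$ are $\Gamma$-invariant and closed this is always arrangeable, and it exhibits $C^{\ast}_{L}(X_i, A)^{\Gamma}$ and $C^{\ast}_{L}(X_1 \cap X_2, A)^{\Gamma}$ as $\ast$-subalgebras of $B$. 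For a $\Gamma$-invariant closed set $Y \subseteq X$ I would then define the \emph{germ ideal} $B_Y \subseteq B$ to consist of all $u \in B$ whose support clusters on $Y$, meaning that for every $\epsilon > 0$ there is $t_\epsilon$ with $\supp(u(t)) \subseteq N_\epsilon(Y) \times N_\epsilon(Y)$ for all $t \ge t_\epsilon$, where $N_\epsilon(Y)$ is the open $\epsilon$-neighborhood. The crucial structural feature here is that $\prop(u(t)) \to 0$: multiplying a $u \in B_Y$ on either side by an arbitrary $v \in B$ moves the support by at most $\prop(v(t)) \to 0$, so $B_Y$ is genuinely a closed two-sided $\ast$-ideal of $B$. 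The cited identification of the localization algebra of a closed subspace with its germ (Theorem 6.3.4 and Proposition 6.6.2 of Willett--Yu) gives $K_*(B_{X_i}) \cong K_*(C^{\ast}_{L}(X_i, A)^{\Gamma})$ and $K_*(B_{X_1 \cap X_2}) \cong K_*(C^{\ast}_{L}(X_1 \cap X_2, A)^{\Gamma})$, so it suffices to prove Mayer--Vietoris for the ideals $B_{X_1}, B_{X_2}$.

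The two identities I need are $B_{X_1} + B_{X_2} = B$ and $B_{X_1} \cap B_{X_2} = B_{X_1 \cap X_2}$. The first is easy and in fact exact: choosing the $\Gamma$-invariant Borel functions $\phi_1 = \chi_{X_1}$ and $\phi_2 = \chi_{X_2 \setminus X_1}$, which satisfy $\phi_1 + \phi_2 = 1$ since $X = X_1 \cup X_2$, every $u \in B$ decomposes as $u = \phi_1 u + \phi_2 u$ with $\phi_i u \in B_{X_i}$ (the left support of $\phi_i u$ sits in $X_i$, and the right support clusters on $X_i$ because $\prop \to 0$). Thus the sum already exhausts $B$, so no separate closedness argument is needed.

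The second identity is where co-compactness becomes indispensable, and I expect it to be the main obstacle. An element of $B_{X_1} \cap B_{X_2}$ has support eventually contained, in each coordinate, in $N_\epsilon(X_1) \cap N_\epsilon(X_2)$, so I must upgrade this to a \emph{controlled excision} statement: for every $\delta > 0$ there is $\epsilon > 0$ with $N_\epsilon(X_1) \cap N_\epsilon(X_2) \subseteq N_\delta(X_1 \cap X_2)$. This can genuinely fail for non-cocompact covers, where two disjoint closed invariant sets may come arbitrarily close; the point is that co-compactness forbids this. Concretely, were the inclusion to fail for some fixed $\delta$ and a sequence $\epsilon_n \to 0$, I would pick points $x_n \in N_{\epsilon_n}(X_1) \cap N_{\epsilon_n}(X_2)$ with $d(x_n, X_1 \cap X_2) \ge \delta$; projecting to the compact quotient $X/\Gamma$, extracting a convergent subsequence, and translating the $x_n$ by suitable group elements (distances are unchanged since the action is isometric and the $X_i$ are invariant) yields a limit point lying in both $X_1$ and $X_2$ yet at distance $\ge \delta$ from $X_1 \cap X_2$, which is absurd. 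Hence $B_{X_1} \cap B_{X_2} \subseteq B_{X_1 \cap X_2}$, and the reverse inclusion is trivial.

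With both identities available the conclusion is formal. The isomorphism theorem gives $B_{X_1}/(B_{X_1} \cap B_{X_2}) \cong (B_{X_1} + B_{X_2})/B_{X_2} = B/B_{X_2}$, so the two extensions $0 \to B_{X_1 \cap X_2} \to B_{X_1} \to B/B_{X_2} \to 0$ and $0 \to B_{X_2} \to B \to B/B_{X_2} \to 0$ have the same quotient, and the inclusions $B_{X_1 \cap X_2} \hookrightarrow B_{X_2}$, $B_{X_1} \hookrightarrow B$ form a morphism of extensions inducing the identity on the quotient. Comparing the two resulting six-term exact sequences in $K$-theory and splicing along the common quotient (the standard derivation of Mayer--Vietoris from excision, as in Theorem 6.4.16 of Willett--Yu) produces the desired sequence, with horizontal maps given by the sum of the two inclusions and by the difference of the two inclusions, and with vertical boundary map inherited from the connecting map of the extension. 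Since every construction above --- the partition of unity, the germ ideals, and the excision argument --- is carried out $\Gamma$-equivariantly, the equivariant sequence follows with no extra work.
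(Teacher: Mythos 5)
Your argument is essentially the proof the paper is implicitly invoking by its citation of Willett--Yu (Theorems 6.3.4, 6.4.16 and Proposition 6.6.2): decompose $C^{\ast}_{L}(X,A)^{\Gamma}$ into the two ideals of elements supported near $X_1$ and $X_2$, check the cover is excisive so that the intersection of the ideals computes $K_{\ast}(C^{\ast}_{L}(X_1\cap X_2,A)^{\Gamma})$, and splice the two six-term sequences along the common quotient; your use of co-compactness to derive the excision estimate $N_\epsilon(X_1)\cap N_\epsilon(X_2)\subseteq N_\delta(X_1\cap X_2)$ is exactly the hypothesis that must be verified in this equivariant setting, and it is correct. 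The only cosmetic adjustment is that the germ ideal $B_Y$ should be defined as the norm closure of the set of $u$ with $\supp(u(t))$ eventually contained in $N_\epsilon(Y)\times N_\epsilon(Y)$, since the literal support condition need not be preserved under norm limits; this does not affect any step of your argument.
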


\subsection{The Baum-Connes conjecture with coefficients}
\begin{definition}\label{Def-length}
	Let $\Gamma$ be a discrete countable group. A \textit{length function} on $\Gamma$ is a function $|\cdot|: \Gamma \rightarrow \mathbb{N}$ such that
	\begin{itemize}
		\item $|\gamma|=0$ if and only if $\gamma$ is the identity element;
		\item $|\gamma^{-1}|=|\gamma|$ for any $\gamma\in \Gamma$;
		\item $|\gamma_1 \gamma_2|\leq |\gamma_1|+|\gamma_2|$ for any $\gamma_1, \gamma_2\in \Gamma$.
	\end{itemize}
	A length function $|\cdot|$ on $\Gamma$ is called to be \textit{proper}, if $\{\gamma: |\gamma|\leq R\}$ is finite for any $R\geq 0$.
\end{definition}

There always exists a proper length function on any discrete countable group (see, for example, \cite[Proposition 1.2.2]{Nowak-Yu-Book}). Moreover, given a length function $|\cdot|$ on $\Gamma$, we can endow it with a right $\Gamma$-invariant metric defined by $d(\gamma_1, \gamma_2)=|\gamma_1 \gamma^{-1}_2|$. 

Let $\Gamma$ be a discrete countable group with a length function $|\cdot|$. For each $d\geq 0$, the \textit{Rips complex} of $\Gamma$ at scale $d$ denoted by $P_{d}(\Gamma)$, is a simplicial complex whose set of vertices is $\Gamma$ and a subset $\{\gamma_0, \cdots, \gamma_n\}$ spans an $n$-simplex if and only if $|\gamma_i \gamma^{-1}_j|\leq d$ for any $i,j=0, \cdots, n$. 

Let $d_{S_d}$ be a path metric on $P_d(\Gamma)$ whose restriction to each simplex is the standard spherical metric on the unit sphere by mapping $\sum_{i=0}^n t_i\gamma_i$ to 
$$\left(t_0 \Big/ \sqrt{\sum t^2_i}, \cdots, t_n \Big/\sqrt{\sum t^2_i}\right).$$ 
Then we define a metric $d_{P_{d}}$ on $P_d(\Gamma)$ to be 
$$d_{P_{d}}(x,x')=\inf\left\{\sum_{i=0}^n d_{S_d}(\gamma_i, \gamma'_i)+\sum_{i=0}^{n-1} |\gamma'_i\gamma^{-1}_{i+1}|\right\},$$
for all $x,x'\in P_{d}(\Gamma)$, where the infimum is taken over all sequences of the form $x=\gamma_0, \gamma'_0, \gamma_1, \gamma'_1, \cdots, \gamma_n, \gamma'_n=y$ with $\gamma_1, \cdots, \gamma_n, \gamma'_0,\cdots, \gamma'_{n-1}\in \Gamma$. Then $P_{d}(\Gamma)$ is a $\Gamma$-space equipped with a right $\Gamma$-action given by 
$$\gamma: \sum_{i=0}^n t_i \gamma_i \mapsto \sum_{i=0}^n t_i \gamma_i \gamma,$$
for any $\gamma, \gamma_0, \cdots, \gamma_n\in \Gamma$. 

For $d_1\leq d_2$, the inclusion map $i_{d_1d_2}$ induces the following two homomorphisms by Lemma \ref{Lem-coveringmap} and Lemma \ref{Lem-conticovmap}:
$$i_{d_1d_2, \ast}: K_{\ast}(C^{\ast}(P_{d_1}(\Gamma), A)^{\Gamma})\rightarrow K_{\ast}(C^{\ast}(P_{d_2}(\Gamma), A)^{\Gamma});$$
$$i_{d_1d_2, \ast}: K_{\ast}(C_L^{\ast}(P_{d_1}(\Gamma), A)^{\Gamma})\rightarrow K_{\ast}(C_L^{\ast}(P_{d_2}(\Gamma), A)^{\Gamma}).$$

Moreover, the evaluation at zero map:
$$e: C^{\ast}_{L}(P_{d}(\Gamma), A)^{\Gamma}\rightarrow C^{\ast}(P_{d}(\Gamma), A)^{\Gamma}, \:u\mapsto u(0),$$
induces a homomorphism:
$$e_{\ast}: K_{\ast}(C^{\ast}_{L}(P_{d}(\Gamma), A)^{\Gamma}) \rightarrow K_{\ast}(C^{\ast}(P_{d}(\Gamma), A)^{\Gamma}),$$
that satisfies $i_{d_1d_2, \ast}\circ e_{\ast}=e_{\ast} \circ i_{d_1d_2, \ast}$ for all $d_1\leq d_2$. 

Then, the \textit{Baum-Connes conjecture with coefficients} in $A$ for $\Gamma$ states as below.

\begin{conjecture}\label{BCC}
	Let $\Gamma$ be a discrete countable group and $A$ be a $\Gamma$-$C^{\ast}$-algebra. Then the following homomorphism 
	$$e_{\ast}: \lim_{d\rightarrow \infty} K_{\ast}(C^{\ast}_{L}(P_{d}(\Gamma), A)^{\Gamma}) \rightarrow \lim_{d\rightarrow \infty} K_{\ast}(C^{\ast}(P_{d}(\Gamma), A)^{\Gamma}),$$
	is an isomorphism.
\end{conjecture}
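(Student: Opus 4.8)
The plan is to deduce that $\Gamma$ satisfies Conjecture \ref{BCC} with coefficients in $A$ by splitting the evaluation map $e_*$ for $\Gamma$ into a \emph{fiber} part governed by $N$ and a \emph{base} part governed by $Q:=\Gamma/N$, and showing each is an isomorphism under the two hypotheses. The geometric input is that, after choosing a set-theoretic section $Q\to\Gamma$, the group $\Gamma$ is coarsely the product of $N$ and $Q$; concretely, $P_d(\Gamma)$ is $\Gamma$-equivariantly coarsely (and, after rescaling, homotopy) equivalent to $P_d(N)\times P_d(Q)$, where $\Gamma$ acts diagonally --- on the $N$-factor through conjugation and on the $Q$-factor through $\pi$ and translation. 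I would use Corollary \ref{Cor-coarseequi-K} and Lemma \ref{Lem-Liphtp} to transport the problem onto this twisted product.

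First I would prove the direct-product case (Theorem \ref{main-thm1}). The device is the equivariant localization algebra along one direction (Definition \ref{Def-Localg-along}), in which the decay condition $\lim_{t\to\infty}\prop(u(t))=0$ is imposed only in the second coordinate. Taking that coordinate to be $Q$, the evaluation map of Conjecture \ref{BCC} factors as
$$C^{\ast}_{L}(P_d(\Gamma),A)^{\Gamma}\ \xrightarrow{\ e^{(N)}\ }\ C^{\ast}_{L,Q}(P_d(\Gamma),A)^{\Gamma}\ \xrightarrow{\ e^{(Q)}\ }\ C^{\ast}(P_d(\Gamma),A)^{\Gamma},$$
where $e^{(N)}$ relinquishes decay in the $N$-direction and $e^{(Q)}$ in the $Q$-direction. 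Over each point of the base $P_d(Q)$ the first map is exactly the $N$-evaluation, so a Mayer--Vietoris induction over the skeleta of $P_d(Q)$ (Lemma \ref{Lem-equiMV}), reducing to single simplices and hence to finite subsets, shows $e^{(N)}_*$ is an isomorphism from the $\mathrm{BC}$ isomorphism for $N$. The second map is the $Q$-evaluation with coefficients in the fiberwise Roe algebra, which I will identify with $A\rtimes_r N$.

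Next I would establish Green imprimitivity in the coarse setting: for $H\le\Gamma$ and an $H$-algebra $B$, explicit imprimitivity bimodules built from the partition $\Gamma=\bigsqcup H\gamma$ give natural isomorphisms $K_*(C^{\ast}(P_d(\Gamma),\Ind_H^{\Gamma}B)^{\Gamma})\cong K_*(C^{\ast}(P_d(H),B)^{H})$ and the analogue for localization algebras, compatibly with $e_*$ (Propositions \ref{Prop-Morita-eq} and \ref{Thm-Green-imprimi-loc}). Since the bimodules preserve finite propagation they descend to the localization picture, yielding the reduction principle (Theorem \ref{Thm-BC-group-sub}): $\Gamma$ satisfies Conjecture \ref{BCC} with coefficients $\Ind_H^{\Gamma}B$ iff $H$ does with coefficients $B$.

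Finally I would glue the two hypotheses in through the reduction principle, applied twice. Using $C_0(Q,A)\cong\Ind_N^{\Gamma}(\mathrm{Res}_N A)$, the case $H=N$ turns hypothesis (1) into the statement that $e^{(N)}_*$ is an isomorphism (the fiber direction). Using Green's theorem $C_0(Q,A)\rtimes_r\Gamma\sim_{\mathrm{Morita}}A\rtimes_r N$ as $Q$-algebras, hypothesis (2) becomes $\mathrm{BC}$ for $Q$ with coefficients $A\rtimes_r N$, i.e.\ that $e^{(Q)}_*$ is an isomorphism (the base direction). Composing gives that $e_*$ for $\Gamma$ with coefficients $A$ is an isomorphism. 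The main obstacle is that the extension is twisted and generally non-split: the identification $P_d(\Gamma)\simeq P_d(N)\times P_d(Q)$ is only coarse and the $\Gamma$-action is diagonal with the $N$-factor conjugation-twisted and the base coefficients $A\rtimes_r N$ carrying a genuinely outer $Q$-action. Ensuring that the along-one-direction splitting --- proved for honest direct products --- still factors $e_*^{\Gamma}$ in this setting, and that the two imprimitivity bimodules are simultaneously compatible with the localization structure and with the decomposition $A\rtimes_r\Gamma\cong(A\rtimes_r N)\rtimes_r Q$ so that the fiber term is precisely $N$'s assembly for $A$ and the base term precisely $Q$'s for $A\rtimes_r N$, is the delicate heart of the argument.
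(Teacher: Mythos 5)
The statement you were asked to prove is Conjecture~\ref{BCC} itself, i.e.\ the Baum--Connes conjecture with coefficients for an arbitrary discrete countable group $\Gamma$ and arbitrary $\Gamma$-$C^{\ast}$-algebra $A$. The paper offers no proof of this statement --- it is a conjecture, and the paper explicitly recalls (citing the Higson--Lafforgue--Skandalis counterexamples) that it fails in general with nontrivial coefficients. Your proposal silently imports the hypotheses of Theorem~\ref{main-thm}: a normal subgroup $N$ with the conjecture holding for $N$ (coefficients $A$) and for $\Gamma/N$ (coefficients $C_0(\Gamma/N,A)\rtimes_r\Gamma$). So what you have written is at best a sketch of Theorem~\ref{main-thm}, not a proof of the statement as given; as a proof of Conjecture~\ref{BCC} it cannot be correct.

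Read instead as a sketch of Theorem~\ref{main-thm}, the argument has a genuine gap exactly where you flag the ``delicate heart.'' Your plan rests on realizing $P_d(\Gamma)$ as a $\Gamma$-equivariant (coarse, homotopy) product $P_d(N)\times P_d(\Gamma/N)$ with a diagonal action twisted through conjugation and a set-theoretic section, and on factoring $e_{\ast}$ through a localization algebra ``along the $\Gamma/N$-direction'' on $P_d(\Gamma)$. For a non-split extension no such equivariant product structure exists (a set-theoretic section is not a homomorphism, and the conjugation action of $\Gamma$ on $N$ does not make the obvious map equivariant), and Definition~\ref{Def-Localg-along} is only available for honest direct products $X\times Y$ of a $\Gamma$-space and a $G$-space --- you never construct the intermediate algebra $C^{\ast}_{L,Q}(P_d(\Gamma),A)^{\Gamma}$ or the claimed factorization. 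The paper's actual route avoids the twisted product entirely: it embeds $\Gamma$ into the genuine direct product $\Gamma\times\Gamma/N$ via $\gamma\mapsto(\gamma,[\gamma])$, uses the reduction result (Theorem~\ref{Thm-BC-group-sub}) together with $\Ind^{\Gamma\times\Gamma/N}_{\Gamma}A\cong C_0(\Gamma/N,A)$ to trade the conjecture for $\Gamma$ with coefficients $A$ for the conjecture for $\Gamma\times\Gamma/N$ with coefficients $C_0(\Gamma/N,A)$, applies the direct-product theorem (Theorem~\ref{main-thm1}) to this untwisted product, and then applies the reduction result a second time (to $N\leq\Gamma$, using $\Ind^{\Gamma}_{N}A\cong C_0(\Gamma/N,A)$) to convert the remaining hypothesis into the conjecture for $N$ with coefficients $A$. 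Note also that the paper deliberately keeps the quotient's coefficients as $C_0(\Gamma/N,A)\rtimes_r\Gamma$ rather than replacing them by $A\rtimes_r N$ ``as $\Gamma/N$-algebras'' as you propose; that equivariant Morita identification is neither established nor needed in the paper's argument.
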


In particular, when $A=\mathbb{C}$, the above conjecture is called the \textit{Baum-Connes conjecture}.

The original statement of the Baum-Connes conjecture with coefficients is by employing the equivariant $KK$-theory to compute the $K$-theory of reduced crossed products, namely, the Baum-Connes assembly map 
$$\mu: \lim_{d\rightarrow \infty} KK^{\Gamma}_{\ast}(P_{d}(\Gamma), A) \rightarrow K_{\ast}(A \rtimes_{r} \Gamma)$$
is an isomorphism. Here the Rips complex provides a concrete model for the classifying space $\underline{E}\Gamma$ for proper actions of $\Gamma$. Thanks to Guentner, Willett and Yu's work \cite[Appendix B]{GWY-2024}, we can identify the above original statement of the Baum-Connes conjecture with coefficients with Conjecture \ref{BCC}.

\begin{proposition}\cite[Theorem B.3]{GWY-2024}\label{Prop-BCC-Orig}
	Let $\Gamma$ be a discrete countable group and $A$ be a $\Gamma$-$C^{\ast}$-algebra. Then we have the following natural commutative diagram
	 $$\xymatrix{
	 	KK^{\Gamma}_{\ast}(P_{d}(\Gamma), A) \ar[r]^{\mu} \ar[d] & 
	 	K_{\ast}(A \rtimes_{r} \Gamma) \ar[d] \\
	    K_{\ast}(C^{\ast}_{L}(P_{d}(\Gamma), A)^{\Gamma}) \ar[r]^{e_{\ast}} &
	 	K_{\ast}(C^{\ast}(P_{d}(\Gamma), A)^{\Gamma})
	 }$$
	 and the above two vertical maps are all isomorphisms for any $d\geq 0$. Hence, the Baum-Connes assembly map $\mu$ is an isomorphism if and only if Conjecture \ref{BCC} holds. 
\end{proposition}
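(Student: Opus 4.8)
The plan is to establish each vertical arrow as an isomorphism separately and then verify that the square commutes, after which the stated equivalence follows formally by passing to the limit over $d$. The right-hand vertical map is the easier of the two. By Lemma~\ref{Lem-eqRoe-crossprod} there is a natural $\ast$-isomorphism $C^{\ast}(P_d(\Gamma), A)^{\Gamma}\cong (A\rtimes_r \Gamma)\otimes \mathcal{K}(H)$, and since $K$-theory is invariant under tensoring by the compact operators on a separable infinite-dimensional Hilbert space this yields a natural isomorphism $K_{\ast}(A\rtimes_r \Gamma)\xrightarrow{\cong} K_{\ast}(C^{\ast}(P_d(\Gamma), A)^{\Gamma})$. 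The only point requiring care is naturality in $d$: I would check that the identification of Lemma~\ref{Lem-eqRoe-crossprod} is implemented by a fixed rank-one projection and by the coarse equivalence $\gamma\mapsto x_0\gamma$, so that it is compatible with the structure maps $i_{d_1 d_2,\ast}$.

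The left-hand vertical map is the substance of the proposition: it is the equivariant, coefficient version of Yu's theorem identifying localization-algebra $K$-theory with $K$-homology. First I would construct the comparison map by a local-index recipe, sending a cycle representing a class in $KK^{\Gamma}_{\ast}(P_d(\Gamma), A)$ to the path $t\mapsto$ (a finite-propagation parametrix at scale $1/t$), whose propagation tends to $0$ and which therefore defines a class in $K_{\ast}(C^{\ast}_L(P_d(\Gamma), A)^{\Gamma})$. To prove bijectivity I would treat both sides as equivariant homology theories on finite-dimensional $\Gamma$-simplicial complexes and the comparison map as a natural transformation between them. Both are homotopy invariant (Lemma~\ref{Lem-Liphtp} on the localization side) and both satisfy a Mayer--Vietoris sequence for $\Gamma$-invariant closed covers (Lemma~\ref{Lem-equiMV} on the localization side, the standard one on the $KK$-side). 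An induction over the skeleta of $P_d(\Gamma)$, decomposing the $n$-skeleton into the $(n-1)$-skeleton together with neighbourhoods of the orbits of open $n$-simplices, then reduces the claim via the five lemma to a base case: a single orbit $\Gamma\cdot\sigma$. Such an orbit is equivariantly homotopy equivalent to the orbit $\Gamma/\Gamma_{\sigma}$ of its barycentre, where properness forces the stabiliser $\Gamma_{\sigma}$ to be finite, and on this induced space an induction/Green--Julg argument computes both groups explicitly and exhibits the comparison map as an isomorphism.

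Finally I would verify commutativity by chasing a $KK$-cycle through the two routes: the assembly map $\mu$ returns the higher index of the cycle in $K_{\ast}(A\rtimes_r \Gamma)$, while evaluating the associated localized path at $t=0$ and applying the right vertical identification returns precisely this index in the Roe algebra, the two agreeing because both are computed from the same scale-zero parametrix data. As all four maps are natural in $d$, the square commutes compatibly with the inductive system, and passing to $\lim_{d\to\infty}$ gives a commuting square whose vertical arrows are isomorphisms; the equivalence ``$\mu$ is an isomorphism iff $e_{\ast}$ is'' is then immediate. I expect the genuine difficulty to be concentrated entirely in the left vertical map---specifically, in making the local-index comparison map manifestly a natural transformation of homology theories and in matching the Mayer--Vietoris sequences and base-case computation on the orbits $\Gamma/\Gamma_{\sigma}$; the right vertical map and the commutativity check are comparatively routine once the two models are aligned.
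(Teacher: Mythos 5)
The paper does not prove this proposition at all: it is imported verbatim as \cite[Theorem B.3]{GWY-2024}, so there is no internal argument to compare against. Your sketch reconstructs the standard proof from that reference (and from the non-equivariant treatment in \cite{WillettYu-Book}): stability of $K$-theory for the right vertical map via Lemma \ref{Lem-eqRoe-crossprod}, a local-index comparison map for the left vertical map proved to be an isomorphism by homotopy invariance, Mayer--Vietoris, induction over skeleta, and a Green--Julg computation on orbits $\Gamma/\Gamma_{\sigma}$ with finite stabilizers --- this is essentially the same approach as the cited source, and the outline is correct.
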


\section{The Baum-Connes conjecture with coefficients for direct products}\label{Sec-BCprod}

In this section, we will discuss the Baum-Connes conjecture with coefficients for the direct products of groups. Firstly, let us fix some notations for this section. Let $\Gamma$, $G$ be two discrete countable groups and $A$ be a $\Gamma\times G$-$C^{\ast}$-algebra. Let $X$ be a $\Gamma$-space with a $\Gamma$-invariant countable dense subset $Z_X$ and $Y$ be a $G$-space with a $G$-invariant countable dense subset $Z_Y$. And let $H$ be a separable Hilbert space.

\subsection{The direct product of groups}

We define a metric on the direct product space $P_{d}(\Gamma)\times P_{d}(G)$ of Rips complexes at scale $d$ by 
$$d((x, y),(x',y'))=\max\{d_{P_{d}}(x,x'), d_{P_{d}}(y, y')\},$$
for all $x,x'\in P_{d}(\Gamma)$ and $y,y'\in P_{d}(G)$. Then $P_{d}(\Gamma)\times P_{d}(G)$ is a $\Gamma\times G$-space equipped with a $\Gamma\times G$-action defined by 
$$(x,y)(\gamma, g)=(x\gamma, yg),$$
for any $\gamma\in \Gamma$, $g\in G$ and $x\in P_{d}(\Gamma)$, $y\in P_{d}(G)$.

%Define $\rho: P_{d}(\Gamma\times G)\rightarrow P_{d}(\Gamma)\times P_{d}(G)$ and $\phi: P_{d}(\Gamma)\times P_{d}(G) \rightarrow P_{d}(\Gamma \times G)$ to be
%$$\rho\left( \sum_{i,j}t_{ij} (\gamma_i, g_j)\right)=\left(\sum_i\left(\sum_j t_{ij}\right) \gamma_i, \sum_{j}\left(\sum_i t_{ij}\right) g_j\right),$$
%$$\phi\left(\sum_i t_i\gamma_i, \sum_j s_j g_j\right)=\sum_{i,j} t_i s_j\left(\gamma_i, g_j\right),$$
%for all $\gamma_i\in \Gamma$ and $g_i\in G$.  

Then, we have the following lemma.
\begin{lemma}\cite[Lemma 4.16]{Zhang-CBCFC}\label{Lem-reduction-product}
	For any $d\geq 0$, the Rips complex $P_{d}(\Gamma\times G)$ is equivariantly homotopy equivalent to and equivariantly coarsely equivalent to $P_{d}(\Gamma)\times P_{d}(G)$.
\end{lemma}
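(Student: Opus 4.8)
The plan is to exhibit explicit equivariant maps between $P_d(\Gamma\times G)$ and $P_d(\Gamma)\times P_d(G)$ and verify the two required properties. First I would set up notation: a point of $P_d(\Gamma\times G)$ is a formal convex combination $\sum_i t_i (\gamma_i,g_i)$ supported on a set of pairs any two of which are within distance $d$ in $\Gamma\times G$, while a point of $P_d(\Gamma)\times P_d(G)$ is a pair $\left(\sum_j s_j\gamma_j,\ \sum_k r_k g_k\right)$ of independently supported simplices. The natural candidate map $\Phi\colon P_d(\Gamma\times G)\to P_d(\Gamma)\times P_d(G)$ is the \emph{marginal} (projection) map sending $\sum_i t_i(\gamma_i,g_i)$ to the pair of its two coordinate-wise pushforwards; one must check that the images have support sets of diameter at most $d$ in $\Gamma$ and in $G$ respectively, which follows because $d(\gamma_i,\gamma_j)\le d((\gamma_i,g_i),(\gamma_j,g_j))\le d$ under the max/word metric, so $\Phi$ is well defined, and it is manifestly $\Gamma\times G$-equivariant.

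Next I would construct a map $\Psi$ in the other direction. Given a pair of simplices $\left(\sum_j s_j\gamma_j,\ \sum_k r_k g_k\right)$, the \emph{product (tensor) combination} $\sum_{j,k} s_j r_k\,(\gamma_j,g_k)$ is a natural candidate; its support consists of pairs $(\gamma_j,g_k)$ with $d(\gamma_j,\gamma_{j'})\le d$ and $d(g_k,g_{k'})\le d$, so every two support points are within $d$ in the max metric and the combination lies in $P_d(\Gamma\times G)$. This $\Psi$ is again equivariant, and one checks $\Phi\circ\Psi=\mathrm{id}$ on the nose. The composite $\Psi\circ\Phi$ is not the identity, but it should be close to it: both send $\sum_i t_i(\gamma_i,g_i)$ to points whose $\Gamma$- and $G$-marginals coincide, and since the whole support sits in a single product of simplices of bounded diameter, the displacement is bounded by a constant depending only on $d$. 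I would record this bound to get the coarse equivalence via Definition \ref{Def-equi-coarsemap}, after checking that $\Phi,\Psi$ are uniformly continuous (the coordinatewise structure makes the spherical metric estimates routine) and proper.

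For the homotopy statement I would produce a straight-line homotopy in $P_d(\Gamma\times G)$ between $\Psi\circ\Phi$ and $\mathrm{id}$: since both endpoints lie, for each input point, inside the same simplex spanned by the product support $\{(\gamma_i,g_j)\}$, the affine interpolation $h(\cdot,s)=(1-s)(\Psi\Phi)(\cdot)+s(\cdot)$ stays within $P_d(\Gamma\times G)$, is equivariant, and is uniformly continuous; the corresponding interpolation on the product side handles $\Phi\circ\Psi=\mathrm{id}$ trivially. This verifies equivariant homotopy equivalence in the sense of Definition \ref{Def-homotopy}. I expect the main obstacle to be purely metric bookkeeping rather than conceptual: one must confirm that the convex interpolations never leave the Rips complexes (the simplices are preserved because convex combinations of support points keep the same support, whose diameter is already $\le d$) and that all four maps satisfy the uniform continuity and properness clauses with respect to the path metric $d_{P_d}$ built from the spherical metric $d_{S_d}$. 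The delicate point is controlling $d_{P_d}$ under these maps when support sets change cardinality (e.g.\ a single pair versus its product of marginals), so the careful estimate comparing $d_{S_d}$ on the product simplex to the ambient path metric is where I would spend the most effort; this is exactly the content cited from \cite[Lemma 4.16]{Zhang-CBCFC}.
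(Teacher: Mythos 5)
The paper does not prove this lemma itself (it is quoted from \cite[Lemma 4.16]{Zhang-CBCFC}), but your marginal map $\Phi$ and product-combination map $\Psi$, with $\Phi\Psi=\mathrm{id}$ and the straight-line homotopy from $\Psi\Phi$ to $\mathrm{id}$ inside the simplex spanned by the product support $\{(\gamma_i,g_j)\}$, is exactly the standard argument behind the cited result, and the metric bookkeeping you defer is indeed routine. The one convention you should make explicit is that the length function on $\Gamma\times G$ is taken to be $|(\gamma,g)|=\max\{|\gamma|,|g|\}$ (consistent with the max metric the paper puts on $P_d(\Gamma)\times P_d(G)$): with the sum length function the product support only spans a simplex of $P_{2d}(\Gamma\times G)$, so the statement at a fixed scale $d$ would fail and one would have to pass to the direct limit over $d$.
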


Combining Lemma \ref{Lem-reduction-product} with Corollary \ref{Cor-coarseequi-K} and Lemma \ref{Lem-Liphtp}, we have the following lemma.

\begin{lemma}\label{Prop-reduction-BC}
	Let $\Gamma$, $G$ be two discrete countable groups and $A$ be a $\Gamma\times G$-$C^{\ast}$-algebra. Then the Baum-Connes conjecture with coefficients in $A$ holds for $\Gamma\times G$ if and only if the following evaluation at zero homomorphism
	$$e_{\ast}: \lim_{d\rightarrow \infty} K_{\ast}\left(C^{\ast}_{L}\left(P_{d}(\Gamma)\times P_{d}(G), A\right)^{\Gamma\times G}\right) \rightarrow \lim_{d\rightarrow \infty} K_{\ast}\left(C^{\ast}\left(P_{d}(\Gamma)\times P_{d}(G), A\right)^{\Gamma \times G}\right),$$
	is an isomorphism.
\end{lemma}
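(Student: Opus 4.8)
The plan is to deduce the statement directly from the definition of the conjecture together with the two equivalences furnished by Lemma \ref{Lem-reduction-product}. By Conjecture \ref{BCC}, the Baum--Connes conjecture with coefficients in $A$ holds for $\Gamma\times G$ precisely when the evaluation map
$$e_{\ast}: \lim_{d\to\infty} K_{\ast}\left(C^{\ast}_{L}(P_{d}(\Gamma\times G), A)^{\Gamma\times G}\right) \to \lim_{d\to\infty} K_{\ast}\left(C^{\ast}(P_{d}(\Gamma\times G), A)^{\Gamma\times G}\right)$$
is an isomorphism. So it suffices to relate each side of this map, in a way compatible with $e_{\ast}$ and with the structure maps $i_{d_1d_2,\ast}$, to the corresponding side of the map in the statement.

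First I would fix, for each $d\geq 0$, a uniformly continuous equivariant coarse map $f_d: P_d(\Gamma\times G)\to P_d(\Gamma)\times P_d(G)$ realizing the equivalences of Lemma \ref{Lem-reduction-product}, noting that the same map serves simultaneously as an equivariant coarse equivalence and as an equivariant homotopy equivalence. Applying Corollary \ref{Cor-coarseequi-K} to $f_d$ gives a natural isomorphism on the Roe-algebra side,
$$(f_d)_{\ast}: K_{\ast}\left(C^{\ast}(P_d(\Gamma\times G), A)^{\Gamma\times G}\right)\xrightarrow{\ \cong\ } K_{\ast}\left(C^{\ast}(P_d(\Gamma)\times P_d(G), A)^{\Gamma\times G}\right),$$
while applying Lemma \ref{Lem-Liphtp} to $f_d$ gives a natural isomorphism on the localization-algebra side,
$$(f_d)_{\ast}: K_{\ast}\left(C^{\ast}_L(P_d(\Gamma\times G), A)^{\Gamma\times G}\right)\xrightarrow{\ \cong\ } K_{\ast}\left(C^{\ast}_L(P_d(\Gamma)\times P_d(G), A)^{\Gamma\times G}\right).$$

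Next I would assemble these into a square whose horizontal arrows are the two evaluation maps $e_{\ast}$ and whose vertical arrows are the two isomorphisms above. The crucial point is commutativity: a uniformly continuous equivariant coarse map induces a $\ast$-homomorphism on both the Roe algebra and the localization algebra, and since the localization-algebra map acts pointwise in the parameter $t$, evaluation at zero intertwines the two, so $e_{\ast}\circ(f_d)_{\ast}=(f_d)_{\ast}\circ e_{\ast}$ after taking $K$-theory. Because the maps $f_d$ can be chosen compatibly in $d$, these squares are compatible with the inclusion-induced maps $i_{d_1d_2,\ast}$, and I would pass to the inductive limit over $d$ to obtain a commutative square of limits in which both vertical maps are isomorphisms. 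The top horizontal map is then the assembly map for $\Gamma\times G$ and the bottom is the map in the statement; since the square commutes and the verticals are invertible, one horizontal map is an isomorphism if and only if the other is, which is exactly the claimed equivalence.

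The main obstacle is the naturality bookkeeping rather than any analysis: I must check that a single $f_d$ implements both equivalences and that the induced $\ast$-homomorphisms on $C^{\ast}_L(\,\cdot\,,A)^{\Gamma\times G}$ and $C^{\ast}(\,\cdot\,,A)^{\Gamma\times G}$ are genuinely compatible with evaluation at zero, and that all of this is uniform enough in $d$ to survive the passage to the limit. Once this naturality is established, the remainder is a formal diagram chase, with all of the substantive geometric and operator-algebraic content already absorbed into Lemma \ref{Lem-reduction-product}, Corollary \ref{Cor-coarseequi-K}, and Lemma \ref{Lem-Liphtp}.
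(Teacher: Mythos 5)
Your proposal is correct and follows exactly the route the paper takes: the paper's proof is precisely the combination of Lemma \ref{Lem-reduction-product} with Corollary \ref{Cor-coarseequi-K} and Lemma \ref{Lem-Liphtp}, with the commutativity with evaluation at zero and compatibility with the limit over $d$ left implicit. Your write-up simply makes that naturality bookkeeping explicit, which is a faithful elaboration of the same argument.
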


\subsection{Equivariant Roe algebras for direct products}

Let $\alpha$ be an action by $\Gamma\times G$ on $A$. Then $\alpha$ induces two actions $\alpha^{\Gamma}$ by $\Gamma$ and $\alpha^{G}$ by $G$ on $A$ defined as follows:
$$\alpha^{\Gamma}_{\gamma}(a)=\alpha_{(\gamma, e_{G})}(a);$$
$$\alpha^{G}_{g}(a)=\alpha_{(e_{\Gamma}, g)}(a),$$
for all $\gamma\in \Gamma$, $g\in G$ and $a\in A$, where $e_{G}$ and $e_{\Gamma}$ are the identity elements of $G$ and $\Gamma$, respectively. And we have $\alpha_{(\gamma, g)}=\alpha^{\Gamma}_{\gamma}\circ \alpha^{G}_{g}=\alpha^{G}_{g}\circ \alpha^{\Gamma}_{\gamma}$ for any $\gamma\in \Gamma$, $g\in G$. Consequently, we obtain an equivariant Roe algebra $C^{\ast}(Y, A)^{G}$. Moreover, we have an action $\beta$ by $\Gamma$ on $C^{\ast}(Y, A)^{G}$ defined as follows:
$$\beta_{\gamma}\left(\left( T_{y,y'} \right)_{y,y'\in Z_Y}\right)=\left(\left(\alpha^{\Gamma}_{\gamma}\otimes I\right)\left(T_{y,y'}\right)\right)_{y,y'\in Z_Y},$$
for all $\gamma\in \Gamma$ and $\left(T_{y,y'}\right)_{y,y'\in Z_Y} \in C^{\ast}(Y, A)^{G}$, where $\alpha^{\Gamma}_{\gamma}\times I: A\otimes \mathcal{K}(H)\otimes \mathcal{K}(\ell^2(G)) \rightarrow A\otimes \mathcal{K}(H)\otimes \mathcal{K}(\ell^2(G))$ maps $a\otimes K \otimes Q$ to $\alpha^{\Gamma}_{\gamma}(a)\otimes K\otimes Q$. Thus, we also obtain an equivariant Roe algebra $C^{\ast}(X, C^{\ast}(Y, A)^{G})^{\Gamma}$.

Now, we consider the relationship between $C^{\ast}(X\times Y, A)^{\Gamma\times G}$ and $C^{\ast}(X, C^{\ast}(Y, A)^{G})^{\Gamma}$. Firstly, we define
\begin{equation*}
  \begin{split}
	\mathbb{C}[X, \mathbb{C}[Y, A]^{G}]^{\Gamma}=&\big \{T\in \mathbb{C}[X, C^{\ast}(Y, A)^{G}]^{\Gamma}: \chi_KT, T\chi_K \in \mathcal{K}\left(\ell^2(Z_X)\otimes H\otimes \ell^2(\Gamma)\right)\otimes \mathbb{C}[Y, A]^{G} \\ 
	&\text{for any compact subset $K\subseteq X$} \big \}.
  \end{split}
\end{equation*}

\begin{lemma}\label{Lem-red-filtration} 
	Let $X$, $Y$, $A$ and $\Gamma$, $G$ be as above. Then the equivariant Roe algebra $C^{\ast}(X, C^{\ast}(Y, A)^{G})^{\Gamma}$ is identical to the norm closure of the $\ast$-algebra $\mathbb{C}[X, \mathbb{C}[Y, A]^{G}]^{\Gamma}$.
\end{lemma}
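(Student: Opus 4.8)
The plan is to establish the two inclusions of norm closures separately, the bulk of the work lying in one of them. Throughout write $B=C^{\ast}(Y,A)^{G}$ and $B_0=\mathbb{C}[Y,A]^{G}$, so that $B_0$ is by construction a dense $\ast$-subalgebra of $B$. First I would check that $\mathbb{C}[X,\mathbb{C}[Y,A]^{G}]^{\Gamma}$ is genuinely a $\ast$-subalgebra of $\mathbb{C}[X,B]^{\Gamma}$: the extra requirement that $\chi_K T,T\chi_K\in\mathcal{K}(\ell^2(Z_X)\otimes H\otimes\ell^2(\Gamma))\otimes B_0$ for every compact $K$ is preserved under adjoints (as $B_0=B_0^{\ast}$) and, using finite propagation to absorb $\chi_K S=\chi_K S\chi_{K'}$ for a slightly larger compact $K'$, under products (as $(\mathcal{K}\otimes B_0)(\mathcal{K}\otimes B_0)\subseteq\mathcal{K}\otimes B_0$). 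This gives the easy inclusion $\overline{\mathbb{C}[X,B_0]^{\Gamma}}\subseteq C^{\ast}(X,B)^{\Gamma}$ for free. Since $C^{\ast}(X,B)^{\Gamma}=\overline{\mathbb{C}[X,B]^{\Gamma}}$ by definition, everything reduces to showing that each $T\in\mathbb{C}[X,B]^{\Gamma}$ can be approximated in norm by elements of $\mathbb{C}[X,B_0]^{\Gamma}$.

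For this I would assemble two ingredients. On the coefficient side, since $B_0$ is dense in $B$ and $\beta_\gamma=\alpha^{\Gamma}_{\gamma}\otimes I$ preserves $B_0$ (the automorphism $\alpha^{\Gamma}_{\gamma}$ of $A$ commutes with the $G$-action and alters neither propagation nor local compactness in the $Y$-direction), the algebraic tensor product of finite-rank operators with $B_0$ is dense in $\mathcal{K}(\ell^2(Z_X)\otimes H\otimes\ell^2(\Gamma))\otimes B$. On the space side, co-compactness supplies a compact $L\subseteq X$ with $\Gamma L=X$; properness then makes $K:=\overline{N_R(L)}$ compact (where $R=\prop(T)$) and guarantees a uniform bound $M=\#\{\gamma:K\gamma\cap K\neq\emptyset\}$ on the number of translates of $K$ through any point. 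I choose a transversal $D\subseteq L\cap Z_X$ for the $\Gamma$-orbits in $Z_X$.

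The approximation then proceeds as follows. Because $\prop(T)\le R$ we have $\chi_D T=\chi_D(\chi_L T)\chi_{N_R(D)}$, which is compact by local compactness of $T$; hence I can choose $V$, finite rank in $\ell^2(Z_X)\otimes H\otimes\ell^2(\Gamma)$ with coefficients in $B_0$ and supported in $D\times N_R(D)$, so that $\|\chi_D T-V\|$ is as small as desired. I then spread $V$ over the group, setting $T':=\sum_{\gamma\in\Gamma}U_{\gamma}VU_{\gamma^{-1}}$. This $T'$ is manifestly $\Gamma$-invariant, has finite propagation, and is locally compact with coefficients in $B_0$ (for compact $K$ only finitely many summands meet $K$, each lying in $\mathcal{K}\otimes B_0$), so $T'\in\mathbb{C}[X,B_0]^{\Gamma}$. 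For the estimate, I use that the ranges of the summands $U_\gamma V U_{\gamma^{-1}}$ sit in the pairwise disjoint sets $D\gamma$ while their sources sit in the sets $K\gamma$ which overlap at most $M$-fold; a Pythagoras argument then yields $\|\sum_\gamma U_\gamma E U_{\gamma^{-1}}\|\le\sqrt{M}\,\|E\|$ for any $E$ supported in $D\times K$. Applying this to $E=\chi_D T-V$, together with the reconstruction identity $T=\sum_\gamma U_\gamma(\chi_D T)U_{\gamma^{-1}}$ valid for a transversal, gives $\|T-T'\|\le\sqrt{M}\,\|\chi_D T-V\|$, which completes the proof.

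The main obstacle is exactly the tension addressed in the third paragraph: I must perturb the (infinitely many) matrix coefficients of $T$ into the non-complete algebra $B_0$ while simultaneously keeping $\Gamma$-invariance and keeping the total perturbation small in operator norm. Neither density of $B_0$ nor finiteness of orbits alone suffices; the point is that local compactness collapses the slice $\chi_D T$ to a genuinely compact operator that a single finite-rank-with-$B_0$-coefficients datum $V$ approximates, while co-compactness lets one compact region control the whole operator through $\Gamma$-translation, the overlap bound $M$ converting the slice estimate into a global one. The only genuinely routine nuisance is the bookkeeping when the action on $Z_X$ is not free: the finite stabilizers (from properness) force a harmless weighting by stabilizer orders in the reconstruction identity, which does not affect the analytic content.
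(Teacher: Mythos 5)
Your proof is correct and follows essentially the same route as the paper's: use co-compactness to localize $T$ to a compact fundamental domain, where local compactness plus density of $\mathbb{C}[Y,A]^{G}$ in $C^{\ast}(Y,A)^{G}$ gives the approximation, and then spread back over $X$ by $\Gamma$-invariance, with finite propagation and properness controlling the bookkeeping. The only difference is that you make explicit the norm control (the $\sqrt{M}$ overlap estimate and the stabilizer weighting) that the paper's terser decomposition $T=T_{\gamma'_1}+\cdots+T_{\gamma'_n}$ leaves implicit.
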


\begin{proof}
	Since the action of $\Gamma$ on $X$ is co-compact, thus there exists a compact subset $D\subseteq X$ such that $X=\cup_{\gamma\in \Gamma} D\gamma$.
	For any $T\in \mathbb{C}[X, C^{\ast}(Y, A)^{G}]^{\Gamma}$, since that $T$ is $\Gamma$-invariant with finite propagation and the action of $\Gamma$ on $X$ is proper, thus there exist $\gamma'_1, \cdots, \gamma'_n\in \Gamma$ such that
	$$T=T_{\gamma'_1}+\cdots+T_{\gamma'_n},$$
	and every $T_{\gamma'_i}$ is a $\Gamma$-invariant and locally compact operator with supported in $\cup_{\gamma\in \Gamma}(D\gamma'_i\gamma \times D\gamma)$. 
	Hence $T_{\gamma'_i}$ can be approximated by a sequence of operators in $\mathbb{C}[X, \mathbb{C}[Y, A]^{G}]^{\Gamma}$ because that $D$ is compact. Therefore, $T$ lies in the norm closure of $\mathbb{C}[X, \mathbb{C}[Y, A]^{G}]^{\Gamma}$, which implies $C^{\ast}(X, C^{\ast}(Y, A)^{G})^{\Gamma}$ is contained in the norm closure of $\mathbb{C}[X, \mathbb{C}[Y, A]^{G}]^{\Gamma}$. And the inverse inclusion is obvious. Thus, the proof is completed.
\end{proof}

\begin{proposition}\label{Lem-different-module}
    Let $X$, $Y$, $A$ and $\Gamma$, $G$ be as above. Then the equivariant Roe algebra $C^{\ast}(X, C^{\ast}(Y, A)^{G})^{\Gamma}$ is naturally (corresponding to $X$) $\ast$-isomorphic to the equivariant Roe algebra $C^{\ast}(X\times Y, A)^{\Gamma\times G}$.
\end{proposition}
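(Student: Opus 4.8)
The plan is to build a spatial $\ast$-isomorphism directly at the level of the algebraic Roe algebras and then pass to completions, rather than appealing to the iterated crossed-product identity $(A\rtimes_r G)\rtimes_r\Gamma\cong A\rtimes_r(\Gamma\times G)$ (which, via Lemma~\ref{Lem-eqRoe-crossprod}, would prove an abstract isomorphism but destroy the naturality in $X$ that the statement asserts). Write $B=C^{\ast}(Y,A)^{G}$, a $\Gamma$-$C^{\ast}$-algebra for the action $\beta$. By Lemma~\ref{Lem-red-filtration}, $C^{\ast}(X,B)^{\Gamma}$ is the norm closure of $\mathbb{C}[X,\mathbb{C}[Y,A]^{G}]^{\Gamma}$, so it suffices to produce a $\ast$-isomorphism from this dense $\ast$-algebra onto $\mathbb{C}[X\times Y,A]^{\Gamma\times G}$ and to check that it is isometric. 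The candidate $\Phi$ is the reindexing that sends an operator $T=(T_{x,x'})_{x,x'\in Z_X}$, whose entries $T_{x,x'}=\big((T_{x,x'})_{y,y'}\big)_{y,y'\in Z_Y}$ lie in $\mathbb{C}[Y,A]^{G}$, to the operator on $X\times Y$ with matrix $\Phi(T)_{(x,y),(x',y')}=(T_{x,x'})_{y,y'}$.

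First I would set up the unitary identification of the underlying Hilbert modules, which is what realizes $\Phi$ spatially. Using the canonical unitaries $\ell^2(Z_X\times Z_Y)\cong\ell^2(Z_X)\otimes\ell^2(Z_Y)$ and $\ell^2(\Gamma\times G)\cong\ell^2(\Gamma)\otimes\ell^2(G)$, I would identify the interior tensor product of the Hilbert $B$-module $_{X}E_B$ with the Hilbert $A$-module $_{Y}E_A$ (on which $B$ acts by its defining faithful representation): concretely
$$_{X}E_B\otimes_B {}_{Y}E_A\cong\ell^2(Z_X)\otimes\ell^2(Z_Y)\otimes A\otimes(H\otimes H)\otimes\ell^2(\Gamma)\otimes\ell^2(G).$$
Absorbing the two auxiliary Hilbert spaces via $H\otimes H\cong H$, this is unitarily the module $_{X\times Y}E_A$, and conjugation by this unitary (equivalently, the amplification $T\mapsto T\otimes 1_{{}_{Y}E_A}$ followed by the identification) is exactly $\Phi$.

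Next I would verify that $\Phi$ carries $\mathbb{C}[X,\mathbb{C}[Y,A]^{G}]^{\Gamma}$ bijectively onto $\mathbb{C}[X\times Y,A]^{\Gamma\times G}$ by checking the defining conditions of Definition~\ref{Def-eqRoecoe}. For invariance, since $\Gamma\times G$ is generated by $\Gamma\times\{e_G\}$ and $\{e_\Gamma\}\times G$, the $\Gamma\times G$-invariance of $\Phi(T)$ decouples—through the explicit matrix formulas in the Remark following Definition~\ref{Def-prop}—into the $G$-invariance of each entry $T_{x,x'}$ (which is precisely membership in $\mathbb{C}[Y,A]^{G}$) and the $\Gamma$-invariance of the family $(T_{x,x'})$ for the action $\beta$; here one uses that $\alpha^{\Gamma}$ and $\alpha^{G}$ commute. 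For finite propagation, under the max metric on $X\times Y$ one has $d\big((x,y),(x',y')\big)=\max\{d_{X}(x,x'),d_{Y}(y,y')\}$, so $\Phi(T)$ has finite propagation once the $X$-propagation of $T$ and the $Y$-propagations of the entries are simultaneously bounded; the former is automatic, and the latter requires a uniform-in-$(x,x')$ bound, which I would extract from $\Gamma$-invariance and co-compactness exactly as in the proof of Lemma~\ref{Lem-red-filtration} (only finitely many $\Gamma$-orbits of relevant pairs $(x,x')$, with $\beta$ preserving $Y$-propagation). Local compactness is guaranteed by the refined algebra of Lemma~\ref{Lem-red-filtration} together with local compactness of the entries in the $Y$-direction, and multiplicativity and $\ast$-preservation are the routine observation that block-matrix multiplication over $Z_X$ with entries multiplied in $B$ agrees with matrix multiplication over $Z_X\times Z_Y$.

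Finally, because $\Phi$ is implemented by a unitary of Hilbert modules it is isometric, so it extends to a $\ast$-isomorphism of the norm closures, identifying $C^{\ast}(X,B)^{\Gamma}$ with $C^{\ast}(X\times Y,A)^{\Gamma\times G}$; surjectivity onto the completion is clear since every element of $\mathbb{C}[X\times Y,A]^{\Gamma\times G}$ is visibly of the form $\Phi(T)$. Naturality in $X$ follows at once: for an equivariant coarse map $f\colon X\to X'$ the reindexing commutes with the $\ast$-homomorphisms induced in Lemma~\ref{Lem-coveringmap}, giving the asserted commuting squares. I expect the main obstacle to be the isometry in the second and fourth paragraphs rather than the bookkeeping: the two algebras are a priori represented on genuinely different modules—a Hilbert $B$-module versus a Hilbert $A$-module—so the real content is to make the interior-tensor-product identification and the $H\otimes H\cong H$ absorption precise enough, using faithfulness (and nondegeneracy on the essential subspace) of the representation of $B$ on $_{Y}E_A$, to conclude that the amplification $T\mapsto T\otimes 1$ is norm-preserving. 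The uniform $Y$-propagation bound is the one point where co-compactness is genuinely invoked and deserves care.
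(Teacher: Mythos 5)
Your proposal is correct and follows essentially the same route as the paper: both realize the isomorphism spatially via the interior tensor product $_{X}E_{C^{\ast}(Y,A)^{G}}\otimes_{\iota}{}_{Y}E_{A}$, absorb $H\otimes H\cong H$ by a unitary, define the map as the amplification $T\mapsto T\otimes 1$ conjugated by this identification, deduce isometry from injectivity (faithfulness of the representation of $C^{\ast}(Y,A)^{G}$ on $_{Y}E_{A}$), and prove surjectivity by reindexing matrix entries onto the dense subalgebra supplied by Lemma~\ref{Lem-red-filtration}. The only difference is presentational: you verify the invariance and propagation conditions in more explicit detail than the paper does.
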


\begin{proof}
    The algebras $C^{\ast}(X, C^{\ast}(Y, A)^{G})^{\Gamma}$ and $C^{\ast}(X\times Y, A)^{\Gamma\times G}$ act on $_{X}E_{C^{\ast}(Y, A)^G}=\ell^2(Z_X)\otimes C^{\ast}(Y, A)^G\otimes H\otimes \ell^2(\Gamma)$ and $_{X\times Y}{E}_{A}=\ell^2(Z_X)\otimes \ell^2(Z_Y)\otimes A\otimes H\otimes \ell^2(G)\otimes \ell^2(\Gamma)$, respectively. Recall that the right Hilbert $A$-module $_{Y}E_A=\ell^2(Z_Y)\otimes A\otimes H\otimes \ell^2(G)$. 
    
    Let $\iota$ be the embedding map from $C^{\ast}(Y, A)^{G}$ to $\mathcal{L}(_{Y}E_{A})$. Then we have an interior tensor product of Hilbert module (see \cite[Chapter 4]{Lance-HilbertMod})
    $$_{X}E_{C^{\ast}(Y, A)^G} \otimes_{\iota} {_{Y}}E_{A}.$$
    Define a linear operator $U: {_{X}}E_{C^{\ast}(Y, A)^G} \otimes_{\iota} {_{Y}E_{A}} \rightarrow \ell^2(Z_X)\otimes {_{Y}}E_{A} \otimes H \otimes \ell^2(\Gamma)$ by
    $$U\left(\left(\xi\otimes b\otimes h\otimes \eta\right)\otimes \vartheta\right)=\xi\otimes \iota(b)\vartheta \otimes h\otimes \eta,$$
    for all $\xi\otimes b\otimes h\otimes \eta\in {_{X}E_{C^{\ast}(Y, A)^G}}$ and $\vartheta \in {_{Y}E_{A}}$. Then $U$ is an isometric, surjective linear map, and hence is a unitary (\cite[Theorem 3.5]{Lance-HilbertMod}). Let $U_0: H\otimes H \rightarrow H$ be a unitary, then $V=I\otimes U_0$ is a unitary from $\ell^2(Z_X)\otimes {_{Y}}E_{A} \otimes H \otimes \ell^2(\Gamma)$ to ${_{X\times Y}E_{A}}$.
    
    Define a map $\psi: \mathcal{L}({_{X}E_{C^{\ast}(Y, A)^{G}}}) \rightarrow \mathcal{L}({_{X\times Y}E_{A}})$ by
    \begin{equation}\label{eq1}
    \psi(T)=VU(T\otimes I_{_{Y}E_{A}})U^{\ast}V^{\ast},
    \end{equation}
    for all $T\in \mathcal{L}({_{X}E_{C^{\ast}(Y, A)^{G}}})$. Then $\psi$ is an injective $\ast$-homomorphism because $\iota$ is injective. Thus, $\psi$ is an isometric linear map. 
    
    Now, we prove $\psi$ is surjective. For any operator $S\in \mathbb{C}[X\times Y, A]^{\Gamma\times G}$, we can represent $S$ as a matrix $\left(\left(S_{(x,y),(x',y')}\right)_{y,y'\in Z_Y}\right)_{x,x'\in Z_X}$, where $\left(S_{(x,y),(x',y')}\right)_{y,y'\in Z_Y} \in \mathbb{C}[Y, A]^G\otimes \ell^2(\Gamma)$. Let 
    $$T=\left(\left((I\otimes U^{\ast}_0)S_{(x,y),(x',y')}(I\otimes U_0)\right)_{y,y'\in Z_Y}\right)_{x,x'\in Z_X},$$ 
    then $T\in \mathbb{C}[X, \mathbb{C}[Y, A]^G]^{\Gamma}$ and $\psi(T)=S$, which implies $\psi$ is a surjective map from $\mathbb{C}[X, \mathbb{C}[Y, A]^G]^{\Gamma}$ to $\mathbb{C}[X\times Y, A]^{\Gamma\times G}$. Therefore, $\psi$ is a $\ast$-isomorphism from $C^{\ast}(X, C^{\ast}(Y, A)^{G})^{\Gamma}$ to $C^{\ast}(X\times Y, A)^{\Gamma\times G}$ by Lemma \ref{Lem-red-filtration}. Obviously, $\psi$ is natural for $X$. 
\end{proof}

\subsection{Equivariant localization algebras along one direction and results}
Consider a right Hilbert $A$-module
$$_{X\times Y} E_{A}=\ell^2(Z_X)\otimes \ell^2(Z_Y)\otimes A \otimes H \otimes \ell^2(\Gamma)\otimes \ell^2(G).$$

\begin{definition}\label{Def-prop-along}
	For $T\in \mathcal{L}(_{X\times Y} E_A)$, the \textit{propagation along $X$} of $T$, denoted by $\prop_{X}(T)$, is defined to be
	$$\prop_{X}(T)=\sup\{d(x,x'): \:\: \text{there exist $y,y'\in Y$ such that $\left(\left(x,y\right), \left(x',y'\right)\right)\in \supp(T)$}\}.$$
\end{definition}

\begin{definition}\label{Def-Localg-along}
	The \textit{equivariant localization algebra along $X$ with coefficients} in $A$ of $X\times Y$, denoted by $C^{\ast}_{L, X}(X\times Y, A)^{\Gamma\times G}$, is defined to be the norm closure of the $\ast$-algebra consisting of all bounded and uniformly continuous functions $u:[0,\infty)\rightarrow C^{\ast}(X\times Y, A)^{\Gamma\times G}$ such that
	$$\lim_{t\rightarrow \infty} \prop_{X}(u(t))=0.$$
\end{definition}

By Proposition \ref{Lem-different-module}, there exists a natural $\ast$-isomorphism $\psi: C^{\ast}(X, C^{\ast}(Y, A)^{G})^{\Gamma} \rightarrow C^{\ast}(X\times Y, A)^{\Gamma\times G}$ (see equation (\ref{eq1})). Then we get a $\ast$-isomorphism 
$$\tilde{\psi}: C^{\ast}_{L}(X, C^{\ast}(Y, A)^{G})^{\Gamma} \rightarrow C^{\ast}_{L, X}(X\times Y, A)^{\Gamma\times G},$$ 
defined by $\widetilde{\psi}(u)(t)=\psi(u(t))$ for all $u\in C^{\ast}_{L}(X, C^{\ast}(Y, A)^{G})^{\Gamma}$. Thus, we have the following lemma connecting the equivariant localization algebra along $X$ with coefficients and the equivariant localization algebra with different coefficients.

\begin{lemma}\label{Localong-Loc}
	Let $X$, $Y$, $A$ and $\Gamma$, $G$ be as above. Then $C^{\ast}_{L}(X, C^{\ast}(Y, A)^{G})^{\Gamma}$ is naturally (corresponding to $X$) $\ast$-isomorphic to $C^{\ast}_{L, X}(X\times Y, A)^{\Gamma\times G}$.
\end{lemma}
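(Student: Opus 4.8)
The plan is to show that the pointwise map $\widetilde{\psi}$ already written down is a $\ast$-isomorphism, and the whole argument hinges on a single identity: the $\ast$-isomorphism $\psi$ of Proposition \ref{Lem-different-module} preserves propagation in the $X$-direction. First I would prove
$$\prop(T)=\prop_{X}(\psi(T))\qquad\text{for every }T\in C^{\ast}(X, C^{\ast}(Y, A)^{G})^{\Gamma},$$
where on the left $\prop$ is the propagation of $T$ regarded as an $X$-indexed operator matrix with entries in $C^{\ast}(Y, A)^{G}\otimes\mathcal{K}(\cdots)$, and on the right $\prop_{X}$ is the propagation along $X$ of its image.

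To prove this identity I would inspect the unitaries $U$ and $V$ appearing in the formula $\psi(T)=VU(T\otimes I)U^{\ast}V^{\ast}$ of equation (\ref{eq1}). By construction $U$ leaves the $\ell^2(Z_X)$ tensor factor untouched --- it only unfolds the $C^{\ast}(Y, A)^{G}$-entry onto the module $_{Y}E_A$ --- and $V=I\otimes U_0$ acts solely on the auxiliary $H\otimes H$ factor; neither moves the $X$-coordinate. Consequently the composite $VU$ intertwines the $X$-localizing operator $\chi_S$ on the source with $\chi_{S\times Y}$ on $_{X\times Y}E_A$, for every $S\subseteq X$. Therefore
$$\chi_{S\times Y}\,\psi(T)\,\chi_{S'\times Y}=VU\big((\chi_S T\chi_{S'})\otimes I\big)U^{\ast}V^{\ast},$$
which vanishes precisely when $\chi_S T\chi_{S'}=0$. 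Reading this off for shrinking neighborhoods shows that $(x,x')$ lies in $\supp(T)\subseteq X\times X$ if and only if $((x,y),(x',y'))\in\supp(\psi(T))$ for some $y,y'\in Z_Y$, which is exactly the displayed propagation identity; since $\psi^{-1}$ is built from the same unitaries, the identity holds in both directions.

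With the identity in hand, the remaining points are bookkeeping. For $u\in C^{\ast}_{L}(X, C^{\ast}(Y, A)^{G})^{\Gamma}$ the condition $\lim_{t\to\infty}\prop(u(t))=0$ transfers to $\lim_{t\to\infty}\prop_{X}(\psi(u(t)))=0$, so $\widetilde{\psi}(u)$ lands in $C^{\ast}_{L, X}(X\times Y, A)^{\Gamma\times G}$; because $\psi$ is isometric, $\widetilde{\psi}(u)$ remains bounded and uniformly continuous. The $\ast$-algebra operations in both localization algebras are computed pointwise in $t$, so $\widetilde{\psi}$ inherits the $\ast$-homomorphism property, and injectivity, directly from $\psi$. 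For surjectivity I would take any $v\in C^{\ast}_{L, X}(X\times Y, A)^{\Gamma\times G}$ and set $u(t)=\psi^{-1}(v(t))$; the reverse propagation identity gives $\lim_{t\to\infty}\prop(u(t))=0$, whence $u\in C^{\ast}_{L}(X, C^{\ast}(Y, A)^{G})^{\Gamma}$ and $\widetilde{\psi}(u)=v$. Naturality in $X$ descends from the naturality of $\psi$ recorded in Proposition \ref{Lem-different-module}.

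The only genuine step is the propagation identity, that is, verifying that the intertwining unitaries of Proposition \ref{Lem-different-module} never spread an operator in the $X$-direction; everything downstream is formal. The subtlety I would watch is that $\prop$ on the left is measured against the metric of $X$, whereas $\prop_{X}$ on the right is the $X$-component of the support inside $(X\times Y)\times(X\times Y)$. I would therefore carry out the support computation on the dense $\ast$-subalgebra of finite-propagation elements (e.g.\ $\mathbb{C}[X, \mathbb{C}[Y, A]^{G}]^{\Gamma}$ of Lemma \ref{Lem-red-filtration}) and pass to the norm closure using that $\psi$ is isometric.
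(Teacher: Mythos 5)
Your proposal is correct and takes essentially the same route as the paper: the paper simply defines $\widetilde{\psi}(u)(t)=\psi(u(t))$ and asserts it is a $\ast$-isomorphism onto $C^{\ast}_{L,X}(X\times Y,A)^{\Gamma\times G}$, leaving implicit exactly the identity $\prop(T)=\prop_{X}(\psi(T))$ that you verify from the observation that the unitaries $U$ and $V$ of equation (\ref{eq1}) do not move the $\ell^2(Z_X)$ factor. Your write-up just supplies the support computation that the paper omits.
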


Combining Lemma \ref{Localong-Loc} with Lemma \ref{Lem-Liphtp} and Lemma \ref{Lem-equiMV}, we obtain the following two lemmas.
\begin{lemma}\label{Lem-Liphtp-Loc}
	 If $X$ is equivariantly homotopy equivalent to $X'$, then $K_{\ast}(C^{\ast}_{L,X}(X\times Y, A)^{\Gamma\times G})$ is naturally (corresponding to $X$ and $X'$) isomorphic to $K_{\ast}(C^{\ast}_{L, X'}(X'\times Y, A)^{\Gamma\times G})$.
\end{lemma}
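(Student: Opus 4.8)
The plan is to reduce this statement to the already-established homotopy invariance of ordinary equivariant localization algebras (Lemma \ref{Lem-Liphtp}) by treating the $Y$-direction as a fixed coefficient algebra. Recall from the construction of the action $\beta$ that $C^{\ast}(Y, A)^{G}$ is itself a $\Gamma$-$C^{\ast}$-algebra. The key bridge is Lemma \ref{Localong-Loc}, which provides a natural (in the $X$-variable) $\ast$-isomorphism
$$
\tilde{\psi}_X : C^{\ast}_{L}(X, C^{\ast}(Y, A)^{G})^{\Gamma} \xrightarrow{\ \cong\ } C^{\ast}_{L, X}(X\times Y, A)^{\Gamma\times G},
$$
and likewise an isomorphism $\tilde{\psi}_{X'}$ for $X'$. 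Passing to $K$-theory, these identify $K_{\ast}(C^{\ast}_{L, X}(X\times Y, A)^{\Gamma\times G})$ with $K_{\ast}(C^{\ast}_{L}(X, C^{\ast}(Y, A)^{G})^{\Gamma})$, and symmetrically for the primed objects.

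First I would apply Lemma \ref{Lem-Liphtp} with the $\Gamma$-$C^{\ast}$-algebra $B = C^{\ast}(Y, A)^{G}$ playing the role of the coefficients. Since $X$ is equivariantly homotopy equivalent to $X'$ as $\Gamma$-spaces, Lemma \ref{Lem-Liphtp} yields a natural isomorphism
$$
K_{\ast}\!\left(C^{\ast}_{L}(X, C^{\ast}(Y, A)^{G})^{\Gamma}\right) \cong K_{\ast}\!\left(C^{\ast}_{L}(X', C^{\ast}(Y, A)^{G})^{\Gamma}\right).
$$
The required isomorphism is then obtained by conjugating this with the two $K$-theory isomorphisms induced by $\tilde{\psi}_X$ and $\tilde{\psi}_{X'}$, i.e.\ by the composite $\tilde{\psi}_{X', \ast} \circ (\text{Lemma } \ref{Lem-Liphtp}) \circ \tilde{\psi}_{X, \ast}^{-1}$.

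The only point requiring genuine care is \emph{naturality}, and this is where I expect the bookkeeping, rather than any conceptual difficulty, to lie. The equivariant homotopy equivalence is between $X$ and $X'$ as $\Gamma$-spaces, so the functorial maps of Lemma \ref{Lem-Liphtp} act only on the $X$-factor while leaving the coefficient algebra $C^{\ast}(Y, A)^{G}$ untouched; I would check that the isomorphism $\tilde{\psi}$ of Lemma \ref{Localong-Loc}, being natural in the $X$-variable, intertwines the maps $f_{1,\ast}, f_{2,\ast}$ induced by the homotopy equivalence on the left-hand side with the corresponding maps on $C^{\ast}_{L, X}(X\times Y, A)^{\Gamma\times G}$. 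Concretely, for each uniformly continuous equivariant coarse map $f \colon X \to X'$ the square relating $\tilde{\psi}_X$, $\tilde{\psi}_{X'}$ and the induced homomorphisms commutes, so the composite isomorphism above is compatible with the homotopy data and is therefore natural with respect to both $X$ and $X'$.

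In summary, the proof is essentially a two-line application of Lemma \ref{Lem-Liphtp} to the coefficient algebra $C^{\ast}(Y, A)^{G}$, transported across the natural identification of Lemma \ref{Localong-Loc}; no Mayer--Vietoris or new analytic estimate is needed here, and the substance reduces to verifying that all identifications are natural in the $X$-direction.
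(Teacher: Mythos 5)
Your proposal is correct and is exactly the paper's argument: the paper derives this lemma (together with the Mayer--Vietoris version) by combining the natural $\ast$-isomorphism of Lemma \ref{Localong-Loc} with the homotopy invariance of Lemma \ref{Lem-Liphtp} applied to the coefficient $\Gamma$-$C^{\ast}$-algebra $C^{\ast}(Y,A)^{G}$. Your extra attention to naturality of $\tilde{\psi}$ in the $X$-variable is the only point the paper leaves implicit, and you handle it appropriately.
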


\begin{lemma}\label{Lem-MV-Loc}
	If $\{X_1, X_2\}$ is a cover of $X$ by $\Gamma$-invariant closed subsets, then the following Mayer-Vietoris six-term exact sequence holds
	$$\footnotesize\xymatrix{
		K_0(C^{\ast}_{L, X_{1,2}}(X_{1,2}\times Y, A)^{\Omega}) \ar[r] & 
		\oplus_{i=1}^{2}K_0(C^{\ast}_{L, X_i}(X_i\times Y, A)^{\Omega}) \ar[r] & 
		K_0(C^{\ast}_{L, X}(X\times Y, A)^{\Omega})\ar[d] \\
		K_1(C^{\ast}_{L, X}(X\times Y, A)^{\Omega})\ar[u] &
		\oplus_{i=1}^{2}K_1(C^{\ast}_{L, X_i}(X_i\times Y, A)^{\Omega})\ar[l] &
		K_1(C^{\ast}_{L, X_{1,2}}(X_{1,2}\times Y, A)^{\Omega}),\ar[l] 
	}$$
	where $X_{1,2}=X_1\cap X_2$ and $\Omega=\Gamma\times G$.
\end{lemma}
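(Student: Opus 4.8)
The plan is to regard the entire localization-along-$X$ picture as an ordinary equivariant localization picture over $\Gamma$ alone, but with an enlarged coefficient algebra, and then to quote the Mayer-Vietoris sequence already available in that setting. Concretely, set $B=C^{\ast}(Y,A)^{G}$, which is a $\Gamma$-$C^{\ast}$-algebra under the action $\beta$ introduced above. By Lemma \ref{Localong-Loc} there is, for each relevant $\Gamma$-invariant closed subspace $X'\subseteq X$, a natural (corresponding to $X'$) $\ast$-isomorphism $C^{\ast}_{L}(X',B)^{\Gamma}\cong C^{\ast}_{L,X'}(X'\times Y,A)^{\Gamma\times G}$. Hence it suffices to produce the six-term exact sequence for the algebras $C^{\ast}_{L}(X_i,B)^{\Gamma}$ on the left-hand side and transport it across these isomorphisms.

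First I would apply Lemma \ref{Lem-equiMV} verbatim, but with the $\Gamma$-$C^{\ast}$-algebra $B$ playing the role of the coefficient algebra and with the given cover $\{X_1,X_2\}$ of $X$ by $\Gamma$-invariant closed subsets. This produces a six-term exact sequence relating $K_{\ast}(C^{\ast}_{L}(X_{1,2},B)^{\Gamma})$, $\oplus_{i=1}^{2}K_{\ast}(C^{\ast}_{L}(X_i,B)^{\Gamma})$ and $K_{\ast}(C^{\ast}_{L}(X,B)^{\Gamma})$, where $X_{1,2}=X_1\cap X_2$. Since Lemma \ref{Lem-equiMV} is already formulated for $\Gamma$-invariant closed subsets, no co-compactness of the individual pieces $X_i$ or $X_{1,2}$ is required at this stage.

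Next I would transport this sequence through the isomorphisms of Lemma \ref{Localong-Loc}. The horizontal restriction maps of the Mayer-Vietoris sequence are induced by the inclusions $X_{1,2}\hookrightarrow X_i\hookrightarrow X$, which are uniformly continuous equivariant coarse maps; because the isomorphism $\psi$ of Proposition \ref{Lem-different-module} (and hence $\widetilde{\psi}$) is natural with respect to $X$, each inclusion-induced map on $C^{\ast}_{L}(-,B)^{\Gamma}$ is carried to the corresponding inclusion-induced map on $C^{\ast}_{L,-}(-\times Y,A)^{\Gamma\times G}$. Replacing every group and every horizontal arrow by its image under the natural isomorphism then yields exactly the asserted sequence with $\Omega=\Gamma\times G$. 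The one point that genuinely needs checking is the vertical connecting homomorphisms: these do not come from inclusions but from the short exact sequence underlying the Mayer-Vietoris construction, so their compatibility with the identifications of Lemma \ref{Localong-Loc} is the main obstacle. However, once the naturality in $X$ recorded in Proposition \ref{Lem-different-module} is interpreted as an isomorphism of the whole diagram of localization algebras (not merely an isomorphism of the individual algebras), the boundary maps are intertwined automatically and exactness is preserved, so this compatibility follows formally.
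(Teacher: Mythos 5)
Your proposal is correct and follows exactly the paper's route: the paper obtains this lemma by combining Lemma \ref{Localong-Loc} (identifying $C^{\ast}_{L,X'}(X'\times Y,A)^{\Gamma\times G}$ with $C^{\ast}_{L}(X',C^{\ast}(Y,A)^{G})^{\Gamma}$) with the Mayer-Vietoris sequence of Lemma \ref{Lem-equiMV} applied to the coefficient algebra $B=C^{\ast}(Y,A)^{G}$. Your additional attention to the naturality of the identification and the compatibility of the boundary maps is a point the paper leaves implicit, and it is handled correctly.
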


Consider the evaluation at zero map
$$e: C^{\ast}_{L, X}(X\times Y, A)^{\Gamma\times G} \rightarrow C^{\ast}(X\times Y, A)^{\Gamma\times G}, \:\:e\mapsto e(0).$$
Then, we have the following proposition.

\begin{proposition}\label{Prop-BCalong-BC}
	Let $\Gamma$, $G$ be two discrete countable groups and $A$ be a $\Gamma\times G$-$C^{\ast}$-algebra. If the Baum-Connes conjecture with coefficients in $A \rtimes_r G$ holds for $\Gamma$.
	Then for any $d\geq 0$, the following homomorphism
	$$e_{\ast}: \lim_{k\rightarrow \infty} K_{\ast}\left(C^{\ast}_{L, P_{k}(\Gamma)}\left(P_{k}(\Gamma)\times P_{d}(G), A\right)^{\Gamma\times G}\right)
	 \rightarrow 
	\lim_{k\rightarrow \infty} K_{\ast}\left(C^{\ast}\left(P_{k}(\Gamma)\times P_{d}(G), A\right)^{\Gamma\times G}\right),$$
is an isomorphism.
\end{proposition}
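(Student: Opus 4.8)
The plan is to reduce this statement, via the natural isomorphisms established above, to the Baum-Connes conjecture for $\Gamma$ with coefficients in the equivariant Roe algebra $C^{\ast}(P_d(G), A)^G$, and then to recognize that algebra as a stabilization of $A \rtimes_r G$. Set $B := C^{\ast}(P_d(G), A)^G$, which is a $\Gamma$-$C^{\ast}$-algebra under the action $\beta$ introduced before Lemma \ref{Lem-red-filtration}. First I would apply Proposition \ref{Lem-different-module} with $X = P_k(\Gamma)$ and $Y = P_d(G)$ to identify $C^{\ast}(P_k(\Gamma)\times P_d(G), A)^{\Gamma\times G}$ with $C^{\ast}(P_k(\Gamma), B)^{\Gamma}$, naturally in $P_k(\Gamma)$, and apply Lemma \ref{Localong-Loc} to identify $C^{\ast}_{L, P_k(\Gamma)}(P_k(\Gamma)\times P_d(G), A)^{\Gamma\times G}$ with $C^{\ast}_L(P_k(\Gamma), B)^{\Gamma}$, again naturally in $P_k(\Gamma)$.

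Next I would check that these identifications intertwine the two evaluation-at-zero maps. Since the localization isomorphism satisfies $\widetilde{\psi}(u)(t) = \psi(u(t))$, it satisfies $\widetilde{\psi}(u)(0) = \psi(u(0))$ in particular, so evaluation at zero on the left corresponds to evaluation at zero on the right. Because both isomorphisms are natural in $P_k(\Gamma)$, they are compatible with the structure maps $i_{k_1 k_2, \ast}$, hence yield isomorphisms of the two inductive systems over $k$. Passing to $K$-theory and to the inductive limit, the homomorphism $e_{\ast}$ in the statement is thereby identified with
$$e_{\ast}: \lim_{k\to\infty} K_{\ast}\!\left(C^{\ast}_L(P_k(\Gamma), B)^{\Gamma}\right) \longrightarrow \lim_{k\to\infty} K_{\ast}\!\left(C^{\ast}(P_k(\Gamma), B)^{\Gamma}\right),$$
which by Conjecture \ref{BCC} is an isomorphism precisely when the Baum-Connes conjecture with coefficients in $B$ holds for $\Gamma$.

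It then remains to pass from coefficients in $B$ to coefficients in $A \rtimes_r G$. By Lemma \ref{Lem-eqRoe-crossprod}, $B = C^{\ast}(P_d(G), A)^G$ is $\ast$-isomorphic to $(A\rtimes_r G)\otimes\mathcal{K}(H)$, and under this isomorphism the action $\beta$, which applies $\alpha^{\Gamma}_{\gamma}$ to the $A$-entries and fixes the remaining factors, should correspond to the natural $\Gamma$-action on $A\rtimes_r G$ induced by $\alpha^{\Gamma}$ (well defined since $\alpha^{\Gamma}$ commutes with the $G$-action) tensored with the trivial action on $\mathcal{K}(H)$. Since the conjecture is invariant under stabilization by $\mathcal{K}(H)$, the hypothesis that it holds for $\Gamma$ with coefficients in $A\rtimes_r G$ would then give it with coefficients in $B$, finishing the proof. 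The step I expect to require the most care is this last one: verifying that $\beta$ matches the natural stabilized $\Gamma$-action on $A\rtimes_r G$ under the isomorphism of Lemma \ref{Lem-eqRoe-crossprod}, and justifying stability of the conjecture; by contrast, the identifications and the compatibility with $e_{\ast}$ in the first two paragraphs are formal consequences of the naturality already recorded.
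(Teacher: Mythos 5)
Your proposal follows essentially the same route as the paper's proof: identify the product Roe and along-$X$ localization algebras with $C^{\ast}(P_k(\Gamma), B)^{\Gamma}$ and $C^{\ast}_L(P_k(\Gamma), B)^{\Gamma}$ for $B = C^{\ast}(P_d(G),A)^G$ via Proposition \ref{Lem-different-module} and Lemma \ref{Localong-Loc}, then recognize $B$ as $(A\rtimes_r G)\otimes\mathcal{K}(H)$ with the induced $\Gamma$-action via Lemma \ref{Lem-eqRoe-crossprod} and invoke the hypothesis. Your explicit attention to the equivariance of the stabilization isomorphism and to stability of the conjecture under tensoring with $\mathcal{K}(H)$ is, if anything, slightly more careful than the paper's own write-up, which passes over these points silently.
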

\begin{proof}
	Let $\alpha$ be the $\Gamma \times G$-action on $A$. Then, the $\Gamma$-action $\beta$ on $A \rtimes_r G$ is defined to be $\beta_{\gamma}(\sum a_g g)= \sum \alpha_{(\gamma, e_{G})}(a_g) g$. Thus by Lemma \ref{Lem-eqRoe-crossprod}, $(A \rtimes_r G)\otimes \mathcal{K}(H)$ is $\Gamma$-equivariantly isomorphic to $C^{\ast}(P_d(G), A)^{G}$ for any $d\geq 0$, where the $\Gamma$-action on $\mathcal{K}(H)$ is trivial. Hence by the assumption, $\Gamma$ satisfies the Baum-Connes conjecture with coefficients in $C^{\ast}(P_d(G), A)^{G}$ for any $d\geq 0$, namely, we have the following isomorphism:
	$$\lim_{k\rightarrow \infty} K_{\ast}\left(C^{\ast}_{L}\left(P_{k}(\Gamma), C^{\ast}(P_d(G), A)^{G}\right)^{\Gamma}\right) \stackrel{\cong}{\longrightarrow} \lim_{k\rightarrow \infty} K_{\ast}\left(C^{\ast}(P_{k}(\Gamma), C^{\ast}(P_d(G), A)^{G})^{\Gamma}\right).$$
	Therefore, combing Proposition \ref{Lem-different-module} and Lemma \ref{Localong-Loc} with the above isomorphism, we completed the proof.
\end{proof}

%\begin{corollary}\label{Cor-Loc-crossprod}
%	Let $\Gamma$, $G$ be two discrete countable groups and $A$ be a $\Gamma\times G$-$C^{\ast}$-algebra. If the Baum-Connes conjecture with coefficients in $A\rtimes_{r} G$ holds for $\Gamma$.
%	Then the following homomorphism
%	$$e_{\ast}: \lim_{k\rightarrow \infty} K_{\ast}\left(C^{\ast}_{L, P_{k}(\Gamma)}\left(P_{k}(\Gamma)\times P_{d}(G), A\right)^{\Gamma\times G}\right)
%	\rightarrow 
%	\lim_{k\rightarrow \infty} K_{\ast}\left(C^{\ast}\left(P_{k}(\Gamma)\times P_{d}(G), A\right)^{\Gamma\times G}\right),$$
%	is an isomorphism for any $d\geq 0$.
%\end{corollary}

Next, we will explore the connection between the localization algebras along one direction and the localization algebras of direct products. Firstly, let us recall an elementary lemma in the $K$-theory as below, please see \cite[Lemma 12.4.3]{WillettYu-Book} for its proof and generalization.

\begin{lemma}\label{Lem-stable-K}
	Let $B$ be a stable $C^{\ast}$-algebra (i.e., $B\cong B\otimes \mathcal{K}(H)$) and $C_{ub}([0,\infty), B)$ be the $C^{\ast}$-algebra of all bounded, uniformly continuous functions from $[0, \infty)$ to $B$. Then the evaluation at zero map 
	$$e: C_{ub}([0,\infty), B) \rightarrow B$$
	induces an isomorphism on the $K$-theory.
\end{lemma}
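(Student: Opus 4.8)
The plan is to reduce the claim to the well-known statement that the cone $C_0((0,\infty), B)$ has trivial $K$-theory, by setting up a short exact sequence whose quotient map is the evaluation at zero. First I would observe that the kernel of the evaluation map $e: C_{ub}([0,\infty), B) \rightarrow B$ is precisely $C_{ub,0}([0,\infty), B)$, the ideal of bounded uniformly continuous functions $f$ with $f(0)=0$. This gives a short exact sequence
$$0 \rightarrow C_{ub,0}([0,\infty), B) \rightarrow C_{ub}([0,\infty), B) \xrightarrow{e} B \rightarrow 0,$$
and the six-term exact sequence in $K$-theory will show that $e_{\ast}$ is an isomorphism as soon as I verify that $K_{\ast}(C_{ub,0}([0,\infty), B)) = 0$. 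So the entire content of the lemma is concentrated in proving the vanishing of the $K$-theory of this kernel ideal, and this is exactly where the hypothesis that $B$ is stable must be used.

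For the vanishing step I would construct an Eilenberg swindle, which is the standard device for killing $K$-theory using a stable coefficient algebra. Using the isomorphism $B \cong B \otimes \mathcal{K}(H)$, I would fix a countable family of mutually orthogonal isometries $\{V_n\}_{n\geq 1}$ in the multiplier algebra of $B$ (coming from the $\mathcal{K}(H)$-factor) with orthogonal ranges, and use them to define an endomorphism of $C_{ub,0}([0,\infty), B)$ that is the infinite direct sum of countably many copies of the identity, arranged to be bounded and uniformly continuous. The key point is that an infinite repetition functor $\Phi(f) = \bigoplus_{n} f$ is homotopic to $\mathrm{id} \oplus \Phi$, while being itself absorbing, so that $\Phi_{\ast} = \Phi_{\ast} + \mathrm{id}_{\ast}$ on $K$-theory, forcing $\mathrm{id}_{\ast} = 0$; hence every $K$-theory class vanishes. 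The uniform continuity and boundedness of the swindle are preserved because each summand reuses the same uniform continuity modulus, so the infinite sum remains uniformly continuous.

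The main obstacle I anticipate is the careful verification that the Eilenberg swindle stays inside the uniformly continuous category: one must confirm that the infinite orthogonal sum $\bigoplus_n f$ of a single uniformly continuous function $f$ is again uniformly continuous (with the same modulus, since the summands share it) and that the homotopy implementing $\Phi \simeq \mathrm{id}\oplus\Phi$ can be realized by a uniformly continuous path of endomorphisms rather than merely a pointwise one. Once this technical bookkeeping is handled, the swindle yields $K_{\ast}(C_{ub,0}([0,\infty), B)) = 0$ directly, and the six-term sequence finishes the argument. Since the excerpt cites \cite[Lemma 12.4.3]{WillettYu-Book} for the proof and generalization, I would lean on that reference for the precise swindle construction, and my contribution here is to lay out the short exact sequence and identify the swindle as the mechanism that makes the stability hypothesis do its work.
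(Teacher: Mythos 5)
The paper itself does not prove this lemma (it defers to \cite[Lemma 12.4.3]{WillettYu-Book}), so I will assess your argument on its own terms. Your reduction is the right one: the kernel of $e$ is the ideal $I=\{f\in C_{ub}([0,\infty),B): f(0)=0\}$, and by the six-term sequence (or by split-exactness, using the constant functions as a splitting of $e$) it suffices to show $K_{\ast}(I)=0$, which is where stability must enter. However, the swindle you describe is the one that famously does not exist. With isometries $V_n$ having orthogonal ranges, the ``infinite direct sum of copies of the identity'' $\Phi(f)=\bigoplus_n f$, i.e.\ $\sum_n V_n f(t)V_n^{\ast}$, does not converge in $B$ for a single nonzero value $f(t)$: the partial sums are not Cauchy in norm, and the strict limit is $f(t)\otimes 1_{\mathcal{B}(H)}$, an element of the multiplier algebra rather than of $B\cong B\otimes\mathcal{K}(H)$. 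If this $\Phi$ were a legitimate endomorphism, the identical argument applied to $B$ itself would show $K_{\ast}(B)=0$ for every stable $C^{\ast}$-algebra, contradicting $K_0(\mathcal{K}(H))=\mathbb{Z}$. The obstacle you flag (uniform continuity in $t$) is not the real one; the real one is membership in the algebra at each fixed $t$. Tellingly, your argument never uses either the condition $f(0)=0$ or the geometry of $[0,\infty)$, and any correct proof must use both.

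The repair is to let the copies escape to infinity in the time direction rather than repeating the same function: set $\Phi(f)(t)=\sum_{n\geq 0} V_n\, f\bigl((t-n)_{+}\bigr) V_n^{\ast}$. Because $f(0)=0$, the $n$-th summand vanishes for $t\leq n$, so for each fixed $t$ the sum is finite and lies in $B$; orthogonality of the ranges gives $\|\Phi(f)(t)-\Phi(f)(s)\|\leq \sup_n\|f((t-n)_+)-f((s-n)_+)\|$, so $\Phi(f)$ is bounded and uniformly continuous with the same modulus as $f$, and $\Phi(f)(0)=0$. Then $\Phi=\mathrm{id}\oplus(\Phi\circ S)$ up to conjugation by an isometry, where $S$ is the translation $f\mapsto f((\cdot-1)_+)$, and $S$ is homotopic to the identity through the uniformly continuous path $f\mapsto f((\cdot-s)_+)$, $s\in[0,1]$; this yields $\mathrm{id}_{\ast}=\Phi_{\ast}-\Phi_{\ast}=0$ on $K_{\ast}(I)$. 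With this substitution your outline becomes a complete proof; as written, the central step fails. (A minor additional caution: $I$ is not the cone $C_0((0,\infty),B)$ --- its elements need not vanish at infinity, so it is not contractible and the swindle, not a contraction, is genuinely required.)
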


Any equivariant Roe algebra is a stable $C^{\ast}$-algebra by Lemma \ref{Lem-eqRoe-crossprod}.

Let $\iota: C^{\ast}_{L}(X\times Y, A)^{\Gamma\times G} \rightarrow C^{\ast}_{L, X}(X\times Y, A)^{\Gamma\times G}$ be the inclusion map.

\begin{proposition}\label{Lem-Key}
	Let $A$ be a $\Gamma\times G$-$C^{\ast}$-algebra. If one of the following conditions holds:
	\begin{enumerate}
		\item \label{Lem-Key-1} for any finite subgroup $F$ of $\Gamma$, the Baum-Connes conjecture with coefficients in $A_F$ holds for $G$, where $A_F$ is the $F$-fixed sub-algebra in $A$;
		\item \label{Lem-Key-2} for any finite subgroup $F$ of $\Gamma$, the Baum-Connes conjecture with coefficients in $A$ holds for $F\times G$. 
	\end{enumerate}
	Then for any $k\geq 0$,
	$$\iota_{\ast}: \lim_{d\rightarrow \infty} K_{\ast}(C^{\ast}_{L}(P_{k}(\Gamma)\times P_{d}(G), A)^{\Gamma\times G}) \rightarrow \lim_{d\rightarrow \infty} K_{\ast}(C^{\ast}_{L,P_{k}(\Gamma)}(P_{k}(\Gamma)\times P_{d}(G), A)^{\Gamma\times G})$$
	is an isomorphism.
\end{proposition}

\begin{proof}
	For any $k\geq 0$, since the metric on $\Gamma$ is proper, hence the dimension of $P_{k}(\Gamma)$ is finite. Let $P_{k}(\Gamma)^{(n)}$ be the $n$-dimensional skeleton of $P_{k}(\Gamma)$. We will prove the lemma by induction on $n$.
	
	For $n=0$, we have $P_{k}(\Gamma)^{(0)}=\Gamma$. Since $\Gamma$ is a proper metric space, thus there exists $c>0$ such that $d(\gamma,\gamma')>c$ for all $\gamma, \gamma'\in \Gamma$. Let $C^{\ast}_{L}(\prod_{\Gamma} P_{d}(G), A)^{\Gamma \times G}$ be a $C^{\ast}$-algebra consisting of all $u\in C^{\ast}_{L}(\Gamma \times P_{d}(G), A)^{\Gamma \times G}$ such that 
	$$u(t)_{(\gamma, y), (\gamma', y')}=0$$
    for any $\gamma\neq\gamma' \in \Gamma$ and $y,y'\in P_d(G)$, $t\geq 0$.
    Let 
    $$J: C^{\ast}_{L}(\prod_{\Gamma} P_{d}(G), A)^{\Gamma \times G} \rightarrow C^{\ast}_{L}(\Gamma\times P_{d}(G), A)^{\Gamma \times G}$$
    be the inclusion map. Then it induces an isomorphism $J_{\ast}$ on the $K$-theory level. Indeed, for any element $u\in C^{\ast}_{L}(\Gamma\times P_{d}(G), A)^{\Gamma \times G}$, there exists $t_0$ such that $\prop(u(t))<c$ for any $t\geq t_0$. Thus, the element $t\mapsto u(t+t_0)\in C^{\ast}_{L}(\prod_{\Gamma} P_{d}(G), A)^{\Gamma \times G}$, which implies $J_{\ast}$ is surjective. For injection of $J_{1}$, if $J_{1}([v])=0$ for an element $[v]\in K_1(C^{\ast}_{L}(\prod_{\Gamma} P_{d}(G), A)^{\Gamma \times G})$. Let $v^{(s)}$ be a path of unitaries connecting $v$ and $0$ in $M_{m}\left(C^{\ast}_{L}(\Gamma\times P_{d}(G), A)^{\Gamma \times G}\right)$. Choosing $v=v^{(0)}, v^{(1)}, \cdots, v^{(q)}=0$ such that $\|v^{(i)}-v^{(i-1)}\|<1$ for $i=1, \cdots, q$. Then there exists $t_1>0$ such that $\max_{0\leq i \leq q} \{\prop(v^{(i)}(t))\}<c$ for any $t\geq t_1$. Let $w^{(i)}(t)=v^{(i)}(t+t_1)$, then $w^{(i)}\in C^{\ast}_{L}(\prod_{\Gamma} P_{d}(G), A)^{\Gamma \times G}$ for any $i=0, \cdots, q$. Thus, $v$ is homotopic to $0$ by a sequence of linear paths between $w^{(i-1)}$ and $w^{(i)}$, which implies $[v]=0$ in $K_1(C^{\ast}_{L}(\prod_{\Gamma} P_{d}(G), A)^{\Gamma \times G})$. By a similar argument as above, we also have $J_0$ is injective. 
    
    Let $C^{\ast}_{L, \Gamma}(\prod_{\Gamma} P_{d}(G), A)^{\Gamma \times G}$ be a $C^{\ast}$-algebra consisting of all $u\in C^{\ast}_{L, \Gamma}(\Gamma \times P_{d}(G), A)^{\Gamma \times G}$ such that 
    $$u(t)_{(\gamma, y), (\gamma', y')}=0$$
    for any $\gamma\neq\gamma' \in \Gamma$ and $y,y'\in P_d(G)$, $t\geq 0$. Then using a similar argument as above paragraph, we can show that the inclusion map $J$ induces an isomorphism
    $$J_{\ast}: K_{\ast}\left( C^{\ast}_{L, \Gamma}(\prod_{\Gamma} P_{d}(G), A)^{\Gamma \times G} \right) \rightarrow K_{\ast}\left( C^{\ast}_{L, \Gamma}(\Gamma \times P_{d}(G), A)^{\Gamma \times G} \right).$$
    Thus, we have the following commutative diagram:
    $$\tiny\xymatrix{
    	K_{\ast}\left(C^{\ast}_{L}(P_d(G), A)^{G}\right) \ar[r]^{\cong} \ar[d]^{\iota_{\ast}} & 
    	K_{\ast}\left(C^{\ast}_{L}(\prod_{\Gamma} P_{d}(G), A)^{\Gamma \times G}\right)\ar[r]^{\cong}_{J_{\ast}} \ar[d]^{\iota_{\ast}} & 
    	K_{\ast}\left(C^{\ast}_{L}(\Gamma\times P_{d}(G), A)^{\Gamma \times G}\right)\ar[d]^{\iota_{\ast}}  \\
    	K_{\ast}\left( C_{ub}([0,\infty), C^{\ast}(P_d(G), A)^{G}) \right) \ar[r]^{\cong} &
    	K_{\ast}\left( C^{\ast}_{L, \Gamma}(\prod_{\Gamma} P_{d}(G), A)^{\Gamma \times G} \right)\ar[r]^{\cong}_{J_{\ast}} &
        K_{\ast}\left( C^{\ast}_{L, \Gamma}(\Gamma \times P_{d}(G), A)^{\Gamma \times G} \right),
    }
    $$
    where the left two horizontal arrows are actually isomorphic on the level of $C^{\ast}$-algebras.
    Moreover, we also have the following commutative diagram:
    $$\xymatrix{
        \lim_{d\rightarrow \infty} K_{\ast}\left(C^{\ast}_{L}(P_d(G), A)^{G}\right) \ar[r]^{e_{\ast}} \ar[d]^{\iota_{\ast}} &
        \lim_{d\rightarrow \infty} K_{\ast}\left(C^{\ast}(P_d(G), A)^{G}\right)\\
        \lim_{d\rightarrow \infty} K_{\ast}\left( C_{ub}([0,\infty), C^{\ast}(P_d(G), A)^{G}) \right) \ar[ur]_{e_{\ast}},
    }
    $$
    and two evaluation at zero map $e$ induce two above isomorphisms $e_{\ast}$ by the assumption and Lemma \ref{Lem-stable-K}. Thus, combing the above two commutative diagrams, we have that $\iota_{\ast}$ is an isomorphism for the case of $n=0$.
    
	Now, we assume the lemma holds for the case of $n=l-1$. Next, we will prove it for $n=l$. Let $c(\Delta)$ be the center of any $l$-dimensional simplex $\Delta$ in $P_{k}(\Gamma)$. Define 
	$$\Delta_1=\{x\in \Delta: d(x, c(\Delta))\leq 1/10\};\:\: \Delta_2=\{x\in \Delta: d(x, c(\Delta))\geq 1/10\}.$$ 
	And let
	$$X_1=\bigcup\{\Delta_1:\text{dim}(\Delta)=l\};\:\:X_2=\bigcup\{\Delta_2:\text{dim}(\Delta)=l\}.$$
	Then, $\{X_1, X_2\}$ is a cover of $P_{k}(\Gamma)^{l}$ by $\Gamma$-invariant closed subsets. Besides, $X_2$ and $X_1\cap X_2$ are equivariantly homotopy equivalent to $P_{k}(\Gamma)^{(l-1)}$ and the disjoint union of the boundaries of all $l$-dimensional simplices of $P_{k}(\Gamma)$, respectively. Thus, $\iota_{\ast}$ are isomorphic for $X_2$ and $X_1\cap X_2$ by the inductive assumption and Lemma \ref{Lem-Liphtp}, Lemma \ref{Lem-Liphtp-Loc}. 
	Moreover, $X_1$ is equivariantly homotopy equivalent to $\{c(\Delta): \text{dim}(\Delta)=l\}$. However, each $c(\Delta)$ is a $F$-fixed point for some finite subgroup $F$ of $\Gamma$. Thus if condition (\ref{Lem-Key-1}) or condition (\ref{Lem-Key-2}) holds, then by Lemma \ref{Lem-Liphtp} and Lemma \ref{Lem-Liphtp-Loc} again, $\iota_{\ast}$ is an isomorphism for $X_1$. Therefore, by Lemma \ref{Lem-equiMV}, Lemma \ref{Lem-MV-Loc} and the five lemma, $\iota_{\ast}$ is an isomorphism for the case of $n=l$. 
\end{proof}

%\begin{remark}
%	Actually, condition (\ref{Lem-Key-1}) and condition (\ref{Lem-Key-2}) in the above proposition are equivalent since the Rips complex $P_{k}(F)\times P_d(G)$ is equivariantly homotopy equivalent to $P_d(G)$ for any finite subgroup $F$ of $\Gamma$ and $k\geq $, $d\geq 0$.
%\end{remark}

Combining Proposition \ref{Lem-Key} and Proposition \ref{Prop-BCalong-BC} with Lemma \ref{Prop-reduction-BC}, we obtain the following main theorem in this section.

\begin{theorem}\label{main-thm1}
    Let $\Gamma$ and $G$ be two discrete countable groups, $A$ be a $\Gamma\times G$-$C^{\ast}$-algebra. If 
    \begin{enumerate}
    	\item \label{main-thm1-1} $F\times G$ satisfies the Baum-Connes conjecture with coefficients in $A$ for any finite subgroup $F$ of $\Gamma$.
    	\item \label{main-thm1-2} $\Gamma$ satisfies the Baum-Connes conjecture with coefficients in $A\rtimes_r G$.
    \end{enumerate} 
    Then, $\Gamma\times G$ satisfies the Baum-Connes conjecture with coefficients in $A$.
\end{theorem}

\begin{remark}
	In the first version of the paper, we replaced condition (\ref{main-thm1-1}) by a simplified condition stating that $G$ satisfies the Baum-Connes conjecture with coefficients in $A$. It is not true in generally since Meyer constructed an interesting counterexample for it in \cite{Meyer-BC-counterex}.
\end{remark}

  When $A=\mathbb{C}$, we get the following corollary.
\begin{corollary}\label{Cor-app-no-coeff}
	Let $\Gamma$ and $G$ be two discrete countable groups. If
	\begin{enumerate}
		\item $G$ satisfies the Baum-Connes conjecture;
		\item $\Gamma$ satisfies the Baum-Connes conjecture with coefficients in the reduced group $C^{\ast}$-algebra $C^{\ast}_{r}(G)$ (with the trivial $\Gamma$-action).
	\end{enumerate}
	Then, $\Gamma\times G$ satisfies the Baum-Connes conjecture.
\end{corollary}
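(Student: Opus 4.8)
The plan is to specialize Theorem \ref{main-thm1} to the coefficient algebra $A=\mathbb{C}$, equipped with the trivial $\Gamma\times G$-action, and then to check that the two hypotheses of that theorem reduce exactly to conditions (1) and (2) of the corollary. First I would observe that the Baum-Connes conjecture with coefficients in $\mathbb{C}$ is, by definition, nothing but the Baum-Connes conjecture. Hence hypothesis (1) of Theorem \ref{main-thm1}, namely that $G$ satisfies the conjecture with coefficients in $A=\mathbb{C}$, is precisely condition (1) of the corollary, and the conclusion of the theorem (Baum-Connes with coefficients in $\mathbb{C}$ for $\Gamma\times G$) is precisely the desired Baum-Connes conjecture for $\Gamma\times G$.

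The only point requiring a short verification is the matching of hypothesis (2). I would identify the reduced crossed product $\mathbb{C}\rtimes_r G$ with the reduced group $C^{\ast}$-algebra $C^{\ast}_r(G)$, and then recall the formula for the induced $\Gamma$-action $\beta$ from the proof of Proposition \ref{Prop-BCalong-BC}, namely $\beta_{\gamma}(\sum a_g g)=\sum \alpha_{(\gamma,e_G)}(a_g)\,g$. Since the coefficient action $\alpha$ of $\Gamma\times G$ on $\mathbb{C}$ is trivial, each $\alpha_{(\gamma,e_G)}$ is the identity map, so $\beta$ is the trivial $\Gamma$-action on $C^{\ast}_r(G)$. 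Consequently hypothesis (2) of Theorem \ref{main-thm1} becomes exactly the assertion that $\Gamma$ satisfies the Baum-Connes conjecture with coefficients in $C^{\ast}_r(G)$ endowed with the trivial $\Gamma$-action, which is condition (2) of the corollary.

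With both hypotheses verified, a direct application of Theorem \ref{main-thm1} finishes the argument. There is no substantial obstacle: the corollary is a genuine specialization rather than an independent result, and the single piece of bookkeeping is the triviality of the induced action $\beta$ on $C^{\ast}_r(G)$, which is immediate from its defining formula once one notes that the original action on $\mathbb{C}$ is trivial.
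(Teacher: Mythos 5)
Your argument is correct and is exactly the paper's route: the corollary is stated there as the immediate specialization of Theorem \ref{main-thm1} to $A=\mathbb{C}$, and your verification that $\mathbb{C}\rtimes_r G\cong C^{\ast}_r(G)$ carries the trivial induced $\Gamma$-action supplies the only bookkeeping the paper leaves implicit.
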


\section{Main results}\label{Sec-main-thm}
In this section, we will study the Baum-Connes conjecture with coefficients for the group extension:
$$1\rightarrow N \rightarrow \Gamma \xrightarrow{q} \Gamma/ N \rightarrow 1.$$
Let $A$ be a $\Gamma$-$C^{\ast}$-algebra acted by an action $\alpha$ of $\Gamma$. Firstly, we need the following reduction result for the Baum-Connes conjecture with coefficients.

\begin{proposition}\label{Thm-BC-group-sub}\cite[Proposition 2.3]{CE-Permanence-BC}
	Let $G$ be a discrete countable group, $N'$ be a subgroup of $\Gamma$ and $B$ be a $G$-$C^{\ast}$-algebra. Then the Baum-Connes conjecture with coefficients in $B$ holds for $N'$ if and only if the Baum-Connes conjecture with coefficients in $C_0(G/ N', B)$ holds for $G$, where the action of $G$ on $C_0(G/ N', B)$ is defined by $g(f)(g'N')=g(f(g^{-1}g'N'))$ for any $f\in C_0(G/ N', B)$.
\end{proposition}

%The following corollary shows that the Baum-Connes conjecture with coefficients for groups can be inherited by subgroups, which had been revealed in \cite{Oyono-BC-tree} and \cite{CE-Permanence-BC}.

%\begin{corollary}\label{Lem-BC-subgroups}
%	Let $N$ be a subgroup of $\Gamma$. If $\Gamma$ satisfies the Baum-Connes conjecture with coefficients (in all $C^{\ast}$-algebras), then $N$ satisfies the Baum-Connes conjecture with coefficients (in any $C^{\ast}$-algebra).
%\end{corollary} 

The group $\Gamma$ can be seen as a subgroup of $\Gamma \times \Gamma/ N$ by the following embedding:
$$\Gamma \rightarrow \Gamma \times \Gamma/ N, \: \gamma \mapsto (\gamma, [\gamma]).$$
Let $\beta'$ be an action of $\Gamma \times \Gamma/ N$ on $C_0(\Gamma/ N, A)$ defined by 
\begin{equation}\label{eq-product-action}
	\beta'_{(\gamma_1, [\gamma'_1])}(f)([\gamma_2])=\alpha_{\gamma_1}(f([\gamma^{-1}_1 \gamma_2 \gamma'_1])),
\end{equation}
for $f\in C_0(\Gamma/ N, A)$.
Thus, by Proposition \ref{Thm-BC-group-sub}, we have the following lemma.

\begin{lemma}\label{Lem-BC-sub-group}
	The following two statements are equivalent:
	\begin{enumerate}
		\item $\Gamma$ satisfies the Baum-Connes conjecture with coefficients in $A$;
		\item $\Gamma \times \Gamma/ N$ satisfies the Baum-Connes conjecture with coefficients in $C_0(\Gamma/ N, A)$.
	\end{enumerate}
\end{lemma}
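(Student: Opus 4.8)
The plan is to read this equivalence off directly from the reduction result Theorem \ref{Thm-BC-group-sub}, applied to a well-chosen subgroup inclusion, together with the $(\Gamma\times\Gamma/N)$-equivariant identification of the induced algebra that has just been established. The conceptual point is that the diagonal-type embedding $\iota:\Gamma\to\Gamma\times\Gamma/N$, $\gamma\mapsto(\gamma,[\gamma])$, realizes $\Gamma$ as a subgroup of $\Gamma\times\Gamma/N$, so that Theorem \ref{Thm-BC-group-sub} becomes applicable with the ambient group $\Gamma\times\Gamma/N$, the subgroup $\iota(\Gamma)$, and coefficient algebra $A$; the algebra $C_0(\Gamma/N,A)$ then appears precisely as the induced coefficient algebra for this inclusion.

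First I would verify that $\iota$ is an injective group homomorphism: injectivity is immediate from the first coordinate, while $\iota(\gamma_1)\iota(\gamma_2)=(\gamma_1\gamma_2,[\gamma_1\gamma_2])=\iota(\gamma_1\gamma_2)$ gives multiplicativity. Thus $\iota(\Gamma)$ is a genuine subgroup of the discrete countable group $\Gamma\times\Gamma/N$, and one equips $\Gamma\times\Gamma/N$ with a proper length function whose restriction to $\iota(\Gamma)$ is compatible with the proper metric on $\Gamma$, so that the hypotheses of Theorem \ref{Thm-BC-group-sub} are satisfied, with $A$ regarded as an $\iota(\Gamma)$-$C^{\ast}$-algebra via the original action $\alpha$. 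Applying Theorem \ref{Thm-BC-group-sub} then yields: statement (1), the Baum--Connes conjecture with coefficients in $A$ for $\Gamma$, holds if and only if $\Gamma\times\Gamma/N$ satisfies the Baum--Connes conjecture with coefficients in $\Ind^{\Gamma\times\Gamma/N}_{\Gamma}A$.

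It remains to transport the coefficients across the equivariant isomorphism $\pi:\Ind^{\Gamma\times\Gamma/N}_{\Gamma}A\to C_0(\Gamma/N,A)$, which intertwines $\Ind\,\alpha$ with the action $\beta'$ of (\ref{eq-product-action}) and has already been exhibited above. For this I would invoke the routine functoriality that the Baum--Connes conjecture with coefficients is invariant under $(\Gamma\times\Gamma/N)$-equivariant $\ast$-isomorphisms of the coefficient algebra: such a $\pi$ induces compatible $\ast$-isomorphisms of the equivariant Roe algebras $C^{\ast}(P_d(\Gamma\times\Gamma/N),-)^{\Gamma\times\Gamma/N}$ and the localization algebras $C^{\ast}_L(P_d(\Gamma\times\Gamma/N),-)^{\Gamma\times\Gamma/N}$ commuting with evaluation at zero, hence identifies the two assembly maps compatibly. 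Combining this with the previous paragraph delivers the equivalence of (1) and (2).

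I do not expect a serious analytic obstacle here: the argument is essentially formal, and the substantive work has already been carried out in proving Theorem \ref{Thm-BC-group-sub} and in constructing the equivariant isomorphism $\pi$. The only points demanding care are bookkeeping ones, namely confirming that $\iota(\Gamma)$ is metrically compatible so that Theorem \ref{Thm-BC-group-sub} genuinely applies, and checking that the coset space $(\Gamma\times\Gamma/N)/\iota(\Gamma)$ is $\Gamma/N$, so that $\Ind^{\Gamma\times\Gamma/N}_{\Gamma}A$ is indeed supported over the base $\Gamma/N$ underlying $C_0(\Gamma/N,A)$; both are straightforward to verify directly.
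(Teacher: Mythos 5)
Your proposal is correct and follows essentially the same route as the paper: embed $\Gamma$ diagonally into $\Gamma\times\Gamma/N$, apply Theorem \ref{Thm-BC-group-sub} to this subgroup inclusion with coefficients $A$, and then identify $\Ind^{\Gamma\times\Gamma/N}_{\Gamma}A$ with $C_0(\Gamma/N,A)$ via the $(\Gamma\times\Gamma/N)$-equivariant isomorphism $\pi$. The bookkeeping points you flag (that $\iota$ is an injective homomorphism, that the coset space is $\Gamma/N$, and that the conjecture is invariant under equivariant isomorphism of coefficients) are exactly the checks the paper leaves implicit.
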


Moreover, by applying Theorem \ref{main-thm1} to the $\Gamma \times \Gamma / N$-$C^{\ast}$-algebra $C_0(\Gamma/ N, A)$ equipped with the action $\beta'$ defined by (\ref{eq-product-action}), we have the following lemma.

\begin{lemma}\label{Lem-BC-quotient}
	If the following two conditions hold:
	\begin{enumerate}
		\item $\Gamma\times F$ satisfies the Baum-Connes conjecture with coefficients in $C_0(\Gamma / N, A)$ for any finite subgroup $F$ of $\Gamma / N$;
		\item $\Gamma / N$ satisfies the Baum-Connes conjecture with coefficients in $C_0(\Gamma / N, A) \rtimes_r \Gamma$.
	\end{enumerate}
	Then, the Baum-Connes conjecture with coefficients in $C_0(\Gamma / N, A)$ holds for $\Gamma \times \Gamma / N$.
\end{lemma}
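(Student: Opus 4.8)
The plan is to obtain this lemma as a direct application of Theorem \ref{main-thm1} to the direct product $\Gamma \times \Gamma/N$, with the single coefficient algebra $B = C_0(\Gamma/N, A)$ equipped with the $\Gamma \times \Gamma/N$-action $\beta'$ from (\ref{eq-product-action}). The two factors playing the roles of the groups in Theorem \ref{main-thm1} will be $\Gamma$ (whose coefficient algebra appears directly as $B$) and $\Gamma/N$ (whose coefficient algebra is a reduced crossed product); the entire argument reduces to identifying the induced factor actions and matching the two hypotheses.

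First I would unpack $\beta'$ into its restrictions to the two factors, as in the decomposition $\alpha_{(\gamma,g)} = \alpha^{\Gamma}_{\gamma} \circ \alpha^{G}_{g}$ used in Section \ref{Sec-BCprod}. Restricting $\beta'$ to the first factor (setting $[\gamma'_1] = [e_{\Gamma}]$) gives, by (\ref{eq-product-action}), the action $\beta'_{(\gamma_1, [e_{\Gamma}])}(f)([\gamma_2]) = \alpha_{\gamma_1}(f([\gamma_1^{-1}\gamma_2]))$, which is exactly the $\Gamma$-action $\beta$ on $C_0(\Gamma/N, A)$ defined earlier. Restricting $\beta'$ to the second factor (setting $\gamma_1 = e_{\Gamma}$) gives the right-translation action $\beta'_{(e_{\Gamma}, [\gamma'_1])}(f)([\gamma_2]) = f([\gamma_2 \gamma'_1])$ of $\Gamma/N$. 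These two actions commute, which both confirms that $(B, \beta')$ is a genuine $\Gamma \times \Gamma/N$-algebra and fixes which factor action plays the role of $\alpha^{\Gamma}$ and which that of $\alpha^{G}$ in Theorem \ref{main-thm1}.

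Next I would match the hypotheses. Assigning the factor $\Gamma$ to the role of the group ``$G$'' in Theorem \ref{main-thm1} (whose coefficient algebra is $B$ itself), hypothesis (1) of the present lemma---that $\Gamma$ satisfies the Baum-Connes conjecture with coefficients in $C_0(\Gamma/N, A)$---is precisely hypothesis (1) of Theorem \ref{main-thm1}. Assigning the factor $\Gamma/N$ to the role of the group ``$\Gamma$'' in Theorem \ref{main-thm1}, the residual action on $B \rtimes_r \Gamma = C_0(\Gamma/N, A) \rtimes_r \Gamma$ is the one induced by $\beta'|_{\Gamma/N}$, and hypothesis (2) of the lemma---that $\Gamma/N$ satisfies the Baum-Connes conjecture with coefficients in $C_0(\Gamma/N, A) \rtimes_r \Gamma$---is precisely hypothesis (2) of Theorem \ref{main-thm1} under the identification $A \rtimes_r G = B \rtimes_r \Gamma$. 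Theorem \ref{main-thm1} then yields that $(\Gamma/N) \times \Gamma$ satisfies the Baum-Connes conjecture with coefficients in $C_0(\Gamma/N, A)$.

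Finally, the conclusion for $(\Gamma/N) \times \Gamma$ transfers to $\Gamma \times \Gamma/N$ via the canonical swap isomorphism $s \colon \Gamma \times \Gamma/N \to (\Gamma/N) \times \Gamma$, under which $\beta'$ pulls back to the action used above; since the Baum-Connes conjecture depends only on the group up to isomorphism and on the coefficient algebra up to equivariant isomorphism, the two statements are equivalent. I do not expect a substantive obstacle here, as the entire content is supplied by Theorem \ref{main-thm1}. The only points genuinely requiring care are the bookkeeping of which factor plays which role, the verification that $\beta'$ restricts to the intended factor actions---so that $C_0(\Gamma/N, A) \rtimes_r \Gamma$ really carries the residual $\Gamma/N$-action meant in hypothesis (2)---and the harmless reordering of the two factors.
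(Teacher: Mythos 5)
Your proposal is correct and follows exactly the paper's route: the paper also obtains this lemma as a direct application of Theorem \ref{main-thm1} to the $\Gamma\times\Gamma/N$-algebra $C_0(\Gamma/N,A)$ with the action $\beta'$, with $\Gamma$ in the role of the factor whose coefficients are $C_0(\Gamma/N,A)$ itself and $\Gamma/N$ in the role of the factor acting on the crossed product. Your explicit identification of the restricted factor actions and the harmless swap of the two factors is just the bookkeeping the paper leaves implicit.
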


%In Lemma \ref{Lem-BC-quotient}, the action of $\Gamma$ on $C_0(\Gamma / N, A)$ is $\beta'_{(\gamma_1, [e_{\Gamma}])}$ for all $\gamma_1\in \Gamma$ and the action of $\Gamma / N$ on $C_0(\Gamma / N, A) \rtimes_r \Gamma$ is $[\gamma](\sum f_{\gamma'}\gamma')=\sum \beta'_{(e_{\Gamma}, [\gamma])}(f_{\gamma'})\gamma'$ for any $[\gamma]\in \Gamma/ N$.

Applying Proposition \ref{Thm-BC-group-sub} again to $\Gamma$ with a subgroup $q^{-1}(F)$ for any finite subgroup $F$ of $\Gamma / N$, we get the following lemma.

\begin{lemma}\label{Lem-BC-N-Gamma}
	Let $F$ be a finite subgroup of $\Gamma / N$. Then the following two statements are equivalent:
	\begin{enumerate}
		\item $q^{-1}(F)$ satisfies the Baum-Connes conjecture with coefficients in $A$;
		\item $\Gamma \times F$ satisfies the Baum-Connes conjecture with coefficients in $C_0(\Gamma / N, A)$.
	\end{enumerate}
\end{lemma}

Combing Lemma \ref{Lem-BC-sub-group}, Lemma \ref{Lem-BC-quotient} and Lemma \ref{Lem-BC-N-Gamma}, we obtain the following main theorem of this paper.

\begin{theorem}\label{main-thm}
	If the following two conditions hold:
	\begin{enumerate}
		\item the group $q^{-1}(F)$ satisfies the Baum-Connes conjecture with coefficients in $A$ for any finite subgroup $F$ of $\Gamma / N$;
		\item the group $\Gamma / N$ satisfies the Baum-Connes conjecture with coefficients in $C_0(\Gamma / N, A) \rtimes_r \Gamma$.
	\end{enumerate}
	Then the group $\Gamma$ satisfies the Baum-Connes conjecture with coefficients in $A$.
\end{theorem}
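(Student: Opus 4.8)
The plan is to convert the extension problem into a direct-product problem by viewing $\Gamma$ as the diagonal-type subgroup of $\Gamma\times\Gamma/N$ under the embedding $\gamma\mapsto(\gamma,[\gamma])$, and then to chain together the three reduction lemmas \ref{Lem-BC-N-Gamma}, \ref{Lem-BC-quotient}, and \ref{Lem-BC-sub-group} that have already been established. All of the genuine analytic work lives inside those lemmas, so the proof of the theorem itself is a purely formal three-step composition; the only thing one must watch is that the coefficient algebra $C_0(\Gamma/N,A)$ is always equipped with precisely the action demanded by the lemma being invoked.

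First I would use hypothesis (1) to apply Lemma \ref{Lem-BC-N-Gamma}. Since $N$ satisfies the Baum-Connes conjecture with coefficients in $A$, the equivalence in that lemma upgrades this to the statement that $\Gamma$ satisfies the Baum-Connes conjecture with coefficients in $C_0(\Gamma/N,A)$, where $\Gamma$ acts by $\beta'_{(\gamma,[e_{\Gamma}])}$. Conceptually this step is an instance of the Green imprimitivity reduction of Theorem \ref{Thm-BC-group-sub} for the pair $N\subseteq\Gamma$, using the identification $\Ind^{\Gamma}_{N}A\cong C_0(\Gamma/N,A)$ of Lemma \ref{Lem-Ind-Gamma}.

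Second, I now have $\Gamma$ verified for the coefficient algebra $B:=C_0(\Gamma/N,A)$, and hypothesis (2) gives $\Gamma/N$ verified for $B\rtimes_r\Gamma$. These are exactly the two inputs of Lemma \ref{Lem-BC-quotient}, which is the direct-product theorem \ref{main-thm1} applied to the pair $(\Gamma,\Gamma/N)$ with coefficient algebra $B$ and action $\beta'$: the factor $\Gamma$ absorbs the coefficients $B$, while the factor $\Gamma/N$ absorbs $B\rtimes_r\Gamma$. The output is that $\Gamma\times\Gamma/N$ satisfies the Baum-Connes conjecture with coefficients in $C_0(\Gamma/N,A)$. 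Finally, I would run Lemma \ref{Lem-BC-sub-group} backwards: this last property of $\Gamma\times\Gamma/N$ is equivalent to $\Gamma$ satisfying the Baum-Connes conjecture with coefficients in $A$, which is the desired conclusion. This closing equivalence is again a Green imprimitivity reduction, now for the embedding $\Gamma\hookrightarrow\Gamma\times\Gamma/N$.

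The main obstacle is not in this assembly at all but in the lemmas it rests on. The delicate point is Proposition \ref{Thm-Green-imprimi-loc}: the induced map $\widetilde{\Phi}$ fails to give a $K$-theory isomorphism at each fixed scale $d$ and becomes an isomorphism only after passing to the inductive limit over $d$, which is why the reduction must be phrased at the level of the coarse $K$-homology rather than scale by scale. Granting that proposition and the direct-product theorem \ref{main-thm1}, the only care required in the present argument is the bookkeeping of the several restricted actions $\beta'_{(\gamma,[e_{\Gamma}])}$ and $\beta'_{(e_{\Gamma},[\gamma])}$ on $C_0(\Gamma/N,A)$ and on $C_0(\Gamma/N,A)\rtimes_r\Gamma$, so that the coefficient $C^{\ast}$-algebras match the hypotheses of Lemmas \ref{Lem-BC-N-Gamma}, \ref{Lem-BC-quotient}, and \ref{Lem-BC-sub-group} on the nose.
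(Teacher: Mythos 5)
Your proposal is correct and follows exactly the same route as the paper: the paper's proof of Theorem \ref{main-thm} is precisely the three-step chain you describe, namely Lemma \ref{Lem-BC-N-Gamma} applied to hypothesis (1), then Lemma \ref{Lem-BC-quotient} combined with hypothesis (2), and finally the backward direction of Lemma \ref{Lem-BC-sub-group}. Your remarks on matching the restricted actions $\beta'$ and on the role of Proposition \ref{Thm-Green-imprimi-loc} are consistent with how the paper sets up those lemmas.
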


When $A=\mathbb{C}$, we have the following corollary.
\begin{corollary}\label{main-cor1}
	If the following two conditions hold:
	\begin{enumerate}
		\item the group $q^{-1}(F)$ satisfies the Baum-Connes conjecture for any finite subgroup $F$ of $\Gamma / N$;
		\item the group $\Gamma / N$ satisfies the Baum-Connes conjecture with coefficients in $C_0(\Gamma / N) \rtimes_r \Gamma$.
	\end{enumerate}
	Then the group $\Gamma$ satisfies the Baum-Connes conjecture.
\end{corollary}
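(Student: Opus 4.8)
The plan is to obtain Corollary \ref{main-cor1} as the immediate specialization of Theorem \ref{main-thm} to the trivial coefficient algebra $A = \mathbb{C}$, so that essentially no new work is required beyond checking that the two hypotheses of the corollary translate correctly into the two hypotheses of the theorem.

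First I would recall the convention fixed just after Conjecture \ref{BCC}: the phrase \emph{the Baum-Connes conjecture} means exactly \emph{the Baum-Connes conjecture with coefficients in $\mathbb{C}$}, where $\mathbb{C}$ carries the trivial $\Gamma$-action. Thus hypothesis (1) of the corollary---that $N$ satisfies the Baum-Connes conjecture---is literally hypothesis (1) of Theorem \ref{main-thm} with $A = \mathbb{C}$. Next I would observe that $C_0(\Gamma/N, \mathbb{C}) = C_0(\Gamma/N)$, so the crossed product $C_0(\Gamma/N, A)\rtimes_r \Gamma$ appearing in hypothesis (2) of Theorem \ref{main-thm} becomes $C_0(\Gamma/N)\rtimes_r \Gamma$ when $A = \mathbb{C}$. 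Hence hypothesis (2) of the corollary matches hypothesis (2) of the theorem verbatim. Invoking Theorem \ref{main-thm} then yields that $\Gamma$ satisfies the Baum-Connes conjecture with coefficients in $\mathbb{C}$, which is precisely the assertion that $\Gamma$ satisfies the Baum-Connes conjecture.

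There is no genuine obstacle in this step: all the substantive content---the two applications of the Green-imprimitivity reduction (Theorem \ref{Thm-BC-group-sub}) and the direct-product permanence result (Theorem \ref{main-thm1}), assembled through Lemmas \ref{Lem-BC-sub-group}, \ref{Lem-BC-quotient} and \ref{Lem-BC-N-Gamma}---is already absorbed into the proof of Theorem \ref{main-thm}. The only point requiring the slightest care is the bookkeeping of the $\Gamma$- and $\Gamma/N$-actions on $C_0(\Gamma/N)$ and on $C_0(\Gamma/N)\rtimes_r \Gamma$, which are inherited from the action $\beta'$ of equation (\ref{eq-product-action}) specialized to the trivial action of $\Gamma$ on $\mathbb{C}$; in that case $\beta'$ reduces to the action of $\Gamma \times \Gamma/N$ on $C_0(\Gamma/N)$ by left and right translation, and one verifies this agrees with the actions implicit in hypothesis (2). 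With that verification the corollary follows at once.
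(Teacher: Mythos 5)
Your proposal is correct and matches the paper exactly: Corollary \ref{main-cor1} is obtained there simply by setting $A=\mathbb{C}$ in Theorem \ref{main-thm}, with the identification $C_0(\Gamma/N,\mathbb{C})=C_0(\Gamma/N)$ and the convention that the Baum-Connes conjecture means the case of trivial coefficients. Your extra remark on tracking the actions inherited from $\beta'$ is harmless bookkeeping already subsumed in the proof of the theorem.
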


\bibliographystyle{plain}
\bibliography{BCextensions}

\end{document}